 \newtheorem{thm}{Theorem}[section]
 \newtheorem{cor}[thm]{Corollary}
 \newtheorem{prop}[thm]{Proposition}
 \theoremstyle{definition}
  \newtheorem{exas}[thm]{Examples}
	\newtheorem{exa}[thm]{Example}
 \theoremstyle{remark}
\numberwithin{equation}{section}
\def\R{\mathbb{R}}
\def\C{\mathbb{C}}
\def\N{\mathbb{N}}
\def\Z{\mathbb{Z}}
\def\D{\mathbb{D}}
\def\T{\mathbb{T}}
\def\ep{\varepsilon}
\def\l{\lambda}
\def\Re#1{\operatorname{Re}#1}
\def\L{\L}
\def\B{\mathcal{B}}
\def\A{\mathcal{A}}
\def\Ker#1{\operatorname{Ker}#1}
\def\L{\mathcal{L}}
\def\dd{d}
\begin{document}

\title[Some developments around the Katznelson-Tzafriri theorem]{Some developments around the Katznelson-Tzafriri theorem}

\author{Charles Batty}
\address{St. John's College\\
University of Oxford\\
Oxford OX1 3JP, UK
}

\email{charles.batty@sjc.ox.ac.uk}

\author{David Seifert}
\address{School of Mathematics, Statistics and Physics\\
 Herschel Building\\ 
 Newcastle University\\
 Newcastle upon Tyne, NE1 7RU, UK
}

\email{david.seifert@ncl.ac.uk}

\begin{abstract}
This paper is a survey article on developments arising from a theorem proved by Katznelson and Tzafriri in 1986 showing that $\lim_{n\to\infty} \|T^n(I-T)\| =0$ if $T$ is a power-bounded operator on a Banach space and $\sigma(T) \cap \T \subseteq \{1\}$.    Many variations and consequences of the original theorem have been proved subsequently, and we provide an account of this branch of operator theory.   
\end{abstract}

\subjclass[2020]{Primary 47A05, Secondary 47A10, 47A35, 47D03}

\keywords{Katznelson-Tzafriri theorem, rate of decay, resolvent condition, Ritt operator}

\thanks{Corresponding author:  Charles Batty}


\date \today

\maketitle

\section{Introduction}

The origins of the Katznelson-Tzafiri theorem go back to the paper \cite{OrnSuc70} where Ornstein and Sucheston proved the following ``zero-two law''.   If  $T$ is a positive contraction on an $L^1$-space, then either $\|T^n(I-T)\| = 2$ for all $n\ge0$, or $\|T^n(I-T)\|\to0$ as $n\to\infty$.   Over a period of time, other theorems of similar type were proved.   Eventually, Katznelson and Tzafriri realised that there was a general theorem which covers all of these cases (see Corollary \ref{KTcor} in this paper).   Furthermore, they were able to extend the scope of their result to more operators and obtain convergence of  $\|T^nf(T)\|$ to $0$ for appropriate functions $f$ (see Theorem \ref{KTthm} for this general version).

The Katznelson-Tzafriri theorem was taken up by other researchers, notably Allan, O'Farrell and Ransford \cite{AOR87,AR89} who linked the theorem to Tauberian theory, and Esterle, Strouse and Zouakia \cite{ESZ1,ESZ2} who expanded the scope of the theorem in various ways.   In particular, they adapted it to $C_0$-semigroups of operators, which has led to many applications to physical models, such as energy decay of damped waves.     Eventually the objectives changed from the question whether there was convergence to $0$ to questions about rates of convergence.   Initially this was studied for $C_0$-semigroups, where answers could most easily be found, and the estimates became very precise provided that the resolvents of the generator could be accurately estimated.   Meanwhile further variants of the Katznelson-Tzafriri theorem in the discrete case were found, and eventually estimates for the rates of convergence were found by adapting the methods used for the continuous-parameter case.

There have been several substantial survey articles on the continuous-parameter results \cite{Bat94,BGTsum,CST20,CT04,Vu97}, but only a few short short surveys in the discrete case \cite{Lek17,Nev97,Zem07}.   The main aim of this survey article is to give a unified, and fairly comprehensive, account of the Katznelson-Tzafriri theorem and the developments that it has led to.  Among the results that we present is a recent new result (Theorem \ref{BSthm}) proved by the authors in a separate paper \cite{BS22}.    We include proofs of some older results, in some cases giving arguments that are new in the discrete setting but are based on known proofs for $C_0$-semigroups.   We include short summaries of some applications in Section \ref{sect4}, and some remarks about semigroups other than $\Z_+$ in Section \ref{sect5}.

\subsection*{Preliminaries}

The unit circle in $\C$ will be denoted by $\T$, the open unit disc by $\D$, and the open right half-plane by $\C_+$.   The closure of $\D$ in $\C$ is $\overline{\D} := \D \cup \T$.

We shall mainly consider bounded holomorphic functions $f$ on $\D$;  they form a Banach algebra $H^\infty(\D)$.   They have boundary functions $f^b$ on $\T$, given by $f^b(z) := \lim_{r\to1-} f(rz)$ where the limit exists for almost all $z\in\T$.   The disc algebra $A(\D)$ is the closed subalgebra of all functions in $H^\infty(\D)$ which have continuous extensions to $\overline{\D}$.    We may write $f$ to mean the function on $\D$ or on $\overline{\D}$, and $f^b$ for the boundary function on $\T$.

Let $(a_n)_{n\in\Z} \in \ell^1(\Z)$, and consider the function $f(z) := \sum_{n=-\infty}^\infty a_n z^n$ for $z \in \T$.   The space of all functions of this form is known as the Wiener algebra  $W(\T)$.  It is a Banach algebra under pointwise multiplication and the norm given by
\[
\|f\|_W := \sum_{n=-\infty}^\infty |a_n|.
\]

For $(a_n)_{n\in\Z_+} \in \ell^1(\Z_+)$, we may define $f(z) = \sum_{n=0}^\infty a_n z^n$ for $z \in \overline{\D}$.  We will denote the space of such functions by $W^+(\D)$, with the norm given by $\|f\|_W := \sum_{n=0}^\infty |a_n|$.   Then $W^+(\D)$ is a subalgebra of $A(\D)$, and it may be identified with the closed subalgebra of $W(\T)$ consisting of functions $f$ where $a_n=0$ for $n<0$.

Let $R$ be a Banach algebra of functions on $\T$ such that $W(\T) \subseteq R \subseteq C(\T)$, with continuous inclusions, and let $R^+$ be the Banach algebra of all functions $f \in A(\D)$  such that $f^b \in R$, so that $W^+(\D) \subseteq R^+ \subseteq A(\D)$.   Let $E$ be a closed subset of $\T$. 
   A function $f \in R^+$ is {\it of spectral synthesis in $R$ with respect to $E$} if there is a sequence $(g_k)_{k\ge1}$  of functions in $R$ such that $g_k$ vanishes on an open set $U_k$ in $\T$ containing $E$, and $\lim_{k\to\infty} \|g_k-f^b\|_R = 0$.   It is clear that such a function $f$ vanishes on $E$.  The set $E$ is said to be {\it a set of spectral synthesis for $R$} if every function $f \in R^+$ which vanishes on $E$ is of spectral synthesis in $R$ with respect to $E$.

 Let $T$ be a power-bounded operator on a complex Banach space $X$, with spectrum $\sigma(T)$ and resolvent set $\rho(T)$,  and let $\sigma_u(T) := \sigma(T) \cap \T$, the  {\it unitary spectrum of $T$}.    Let $\A$ be a Banach algebra such that $W^+(\D) \subseteq \A \subseteq H^\infty(\D)$ with continuous inclusions.   A {\it bounded $\A$-calculus for $T$} is a bounded algebra homomorphism from $\A$ to the algebra $L(X)$ of bounded operators on $X$, written as $f \mapsto f(T)$, mapping the constant function $1$ to the identity operator on $X$ and the identity function $z$ to $T$.   If a bounded $\A$-calculus exists, then there exists $C$ such that $\|p(T)\| \le C \|p\|_\A$ for all polynomials $p$.   The converse is true if the polynomials are dense in $\A$.
 
 For $f \in W^+(\D)$ with Taylor series $f(z) = \sum_{n=0}^\infty a_n z^n$, let $f(T) := \sum_{n=0}^\infty a_n T^n$.   This defines a contractive $W^+(\D)$-functional calculus for $T$. 
 
An operator $T \in L(X)$ is {\it polynomially bounded} if there is a constant $C$ such that
\[
\|p(T)\| \le C \|p\|_\infty
\] for all polynomials $p$, where $\|\cdot\|_\infty$ denotes the supremum norm on $C(\overline\D)$.   There is a bounded $A(\D)$-calculus for $T$ if and only if $T$ is polynomially bounded.
 
 \section{The general case} \label{sect2}

\subsection{The main theorem}
Katznelson and Tzafriri proved the following result in \cite[Theorem 5]{KT86}.

\begin{thm} \label{KTthm}
Let $T$ be a power-bounded operator on a Banach space $X$, and let $f \in W^+(\D)$ be of spectral synthesis in $W(\T)$ with respect to $\sigma_u(T)$.   Then $\lim_{n\to\infty} \|T^n f(T)\| = 0$.
\end{thm}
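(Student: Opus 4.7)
The plan is to combine the approximation provided by spectral synthesis with a Cauchy-type integral representation that exhibits decay in $n$ via the Riemann--Lebesgue lemma. By hypothesis, I first pick a sequence $(g_k)_{k\ge 1}$ in $W(\T)$, each $g_k$ vanishing on an open neighbourhood $U_k$ of $\sigma_u(T)$, with $\|g_k-f^b\|_W\to 0$. Splitting $g_k=g_k^{+}+g_k^{-}$ into Riesz projections (so $g_k^{+}\in W^{+}(\D)$ and $g_k^{-}$ is supported on negative Fourier indices), and noting that $f^b$ has only nonnegative Fourier coefficients, we have $\|g_k^{+}-f\|_W\le\|g_k-f^b\|_W$ and $\|g_k^{-}\|_W\le\|g_k-f^b\|_W$, so both tend to $0$. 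With $M:=\sup_n\|T^n\|$, the $W^{+}(\D)$-calculus satisfies $\|h(T)\|\le M\|h\|_W$, whence $\|T^n(f-g_k^{+})(T)\|=\|(z^n(f-g_k^{+}))(T)\|\le M\|f-g_k^{+}\|_W$ uniformly in $n$. The theorem will therefore follow once we show that, for each fixed $k$, $\limsup_{n\to\infty}\|T^n g_k^{+}(T)\|\le M\|g_k^{-}\|_W$.

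The heart of the proof is to establish the identity
\[
T^n g_k^{+}(T) \;=\; \frac{1}{2\pi i}\int_{\T\setminus U_k} w^n\, g_k(w)\, R(w,T)\, dw \;-\; q_{k,n}(T),
\]
where $R(\cdot,T)$ is the resolvent and $q_{k,n}(z):=\sum_{m=0}^{n-1}a_{m-n}^{(k)}z^m$ with $a_j^{(k)}$ the Fourier coefficients of $g_k$, so that $\|q_{k,n}\|_W\le\|g_k^{-}\|_W$ and hence $\|q_{k,n}(T)\|\le M\|g_k^{-}\|_W$. The scalar analogue of this identity follows from the Cauchy formula for $g_k^{+}\in A(\D)$: using $g_k^{+}|_\T=g_k-g_k^{-}$ on $\T$, the vanishing of $g_k$ on $U_k$, and the fact that the Cauchy transform of $g_k^{-}$ into $\D$ is zero (all its Fourier coefficients have negative index), one obtains $g_k^{+}(z)=(2\pi i)^{-1}\int_{\T\setminus U_k}g_k(w)/(w-z)\,dw$ for $z\in\D$. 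To promote this to an operator identity, I extend $g_k^{+}$ across $U_k$ by Schwarz reflection to a holomorphic function $G_k$ on $\Omega_k:=\C\setminus(\T\setminus U_k)$ (with $G_k=-g_k^{-}$ on $\{|z|>1\}$); since $\sigma(T)\subseteq\Omega_k$, the Riesz--Dunford calculus applied to $z\mapsto z^n G_k(z)$ gives a contour representation on any $\Gamma\subseteq\Omega_k\cap\rho(T)$ around $\sigma(T)$. Decomposing $\Gamma$ homologically as $\{|w|=R\}-\Gamma_2$ for some $R>1$, where $\Gamma_2$ is a loop around the slit $\T\setminus U_k$ not enclosing $\sigma(T)$, the outer-circle contribution is computed via the Neumann expansion of $R(w,T)$ at infinity to yield $-q_{k,n}(T)$, while collapsing $\Gamma_2$ onto $\T\setminus U_k$ extracts the jump $G_k^{\mathrm{inner}}-G_k^{\mathrm{outer}}=g_k^{+}+g_k^{-}=g_k$ across $\T\setminus U_k$ to yield the boundary integral.

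With the identity in hand, the integrand $w\mapsto g_k(w)R(w,T)$ is continuous on $\T\setminus U_k$ (a compact subset of $\rho(T)$) and extends by zero across $U_k$ to a continuous $L(X)$-valued function on $\T$, so by the Banach-space-valued Riemann--Lebesgue lemma its $n$-th Fourier-type coefficient $\int_{\T\setminus U_k}w^n g_k(w)R(w,T)\,dw$ tends to $0$ in operator norm as $n\to\infty$. This yields $\limsup_n\|T^n g_k^{+}(T)\|\le M\|g_k^{-}\|_W$, and combining with the reduction from the first paragraph gives $\limsup_n\|T^n f(T)\|\le M(\|f-g_k^{+}\|_W+\|g_k^{-}\|_W)\to 0$ as $k\to\infty$. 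The most delicate step will be the Riesz--Dunford argument: justifying the identification $(z^nG_k)(T)=T^n g_k^{+}(T)$ (for instance by first checking the formula when $\sigma(T)\subseteq\D$ strictly, via the contour $\T\subseteq\rho(T)$, and then extending to the general case by considering $\lambda T$ and letting $\lambda\to 1^{-}$) and ensuring that $\Gamma_2$ can be drawn inside $\rho(T)$, which works precisely because $\sigma(T)\cap(\T\setminus U_k)=\sigma_u(T)\cap(\T\setminus U_k)=\emptyset$.
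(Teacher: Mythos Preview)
Your argument is correct and, at the structural level, coincides with the paper's first proof: both reduce to an identity expressing $T^n g^+(T)$ (up to the polynomial tail $q_{k,n}(T)$ coming from the negative Fourier coefficients) as the $n$-th Fourier coefficient of the $L(X)$-valued function $w\mapsto g_k(w)(wI-T)^{-1}$ on $\T$, and then invoke the Riemann--Lebesgue lemma together with the spectral-synthesis approximation. In fact the paper's quantity $\sum_{k\in\Z}a_k T^{n+k}$ (with $T^m:=0$ for $m<0$) is exactly your $T^n g_k^+(T)+q_{k,n}(T)$, and the paper's integral formula for it is precisely your boundary integral.

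Where you differ is in the derivation of that identity. You obtain it via analytic continuation of $g_k^+$ across $U_k$ (this is an edge-of-the-wedge/Morera gluing rather than Schwarz reflection proper, but your description of the extension as $-g_k^-$ on $\{|z|>1\}$ is correct), followed by a Riesz--Dunford contour deformation and a Plemelj-type jump computation. The paper instead writes down the identity directly by inserting an Abel factor $r^{|n+k|+1}$, applying Fubini to recognise a Neumann series for $(r^{-1}e^{i\theta}I-T)^{-1}$, and letting $r\to1^-$. This is shorter and avoids the contour bookkeeping (choice of $\Gamma_2$, collapsing onto $\T\setminus U_k$, the $\lambda\to1^-$ justification that $(z^nG_k)(T)=T^n g_k^+(T)$). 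Your route has the virtue of making the role of the holomorphic extension across $U_k$ explicit, which connects naturally to the contour-integral viewpoint used elsewhere in the subject; the paper's route is more elementary and needs no functional calculus beyond the $W^+(\D)$-series. The paper also records a second, quite different, proof via bounded complete trajectories for $T^*$ and a discrete Carleman transform.
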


They made the following observations about the assumptions in their theorem:
\begin{enumerate} [\rm(i)]
\item In order for the theorem to be non-trivial, $\sigma_u(T)$ must be contained in the zero set of $f$ in $\T$ for some non-zero $f \in W^+(\D)$.   This implies that $\sigma_u(T)$ must be of measure zero with respect to Lebesgue measure on $\T$.
\item If $E$ is a countable closed subset of $\T$, then $E$ is a set of spectral synthesis for $W(\T)$.
\item There exist closed subsets $E$ of $\T$ which have measure zero and are not of spectral synthesis for $W(\T)$.
\item Let $f \in W^+(\D)$ vanish on a closed subset $E$ of $\T$.   Then $f$ is of spectral synthesis in $W(\T)$ with respect to $E$ if and only if  $\lim_{n\to\infty} \|T^nf(T)\| = 0$ whenever $T$ is a contraction on any Banach space with $\sigma_u(T) \subseteq E$.   [This observation was attributed to J. Bourgain.]
\end{enumerate}

The proof of Theorem \ref{KTthm} in \cite{KT86} used methods from harmonic analysis.   V\~u \cite{Vu92a} gave a very short proof based on a functional analytic construction and an application of spectral theory for an invertible isometry.  An analogous result for $C_0$-semigroups was proved using similar techniques (see Section \ref{sect5.1}).   In a survey article, Chill and Tomilov \cite[Theorem 5.2]{CT04} gave two more proofs of the continuous-parameter version of Theorem \ref{KTthm}, one via Fourier analysis and one via complete bounded trajectories.   Here we give similar proofs for the discrete version.      

\begin{proof}[Our first proof of Theorem~\ref{KTthm}]
Let $g\in W(\T)$ by given by $g(z)=\sum_{n\in\Z}a_nz^n$ for $z\in\T$, where $\sum_{n\in\Z}|a_n|<\infty$.
Then
$$a_n=\frac{1}{2\pi}\int_{-\pi}^{\pi}e^{-in\theta}g(e^{i\theta})\,\dd\theta,\qquad n\in\Z.$$
Suppose that $g$ vanishes on a neighbourhood of $\sigma_u(T)$, and consider the family $(T^n)_{n\in\Z}$, where we interpret $T^n$ as the zero operator when $n<0$. Using the dominated convergence theorem and Fubini's theorem we see that
$$\begin{aligned}
\sum_{k\in \Z}a_kT^{n+k}&=\lim_{r\to1-}\sum_{k\in\Z} a_kr^{n+k+1}T^{n+k}\\
&=\lim_{r\to1-}\frac{1}{2\pi}\int_{-\pi}^{\pi}g(e^{i\theta})\sum_{k\in\Z} r^{n+k+1}e^{-i k\theta}T^{n+k}\,\dd\theta\\&=\lim_{r\to1-}\frac{1}{2\pi}\int_{-\pi}^{\pi}e^{i(n+1)\theta}g(e^{i\theta})(r^{-1}e^{i\theta}I-T)^{-1}\,\dd\theta\\&=\frac{1}{2\pi}\int_{-\pi}^\pi e^{i(n+1)\theta}g(e^{i\theta})(e^{i\theta}I-T)^{-1}\,\dd\theta
\end{aligned}$$
for all $n\ge0$.  By the Riemann-Lebesgue lemma,
$$\bigg\|\sum_{k\in \Z}a_kT^{n+k}\bigg\|\to0,\qquad n\to\infty. $$

Now let $\ep>0$ and let $g_\ep\in W(\T)$ be a function that vanishes on a neighbourhood of $\sigma_u(T)$ and satisfies $\|f-g_\ep\|_W<\ep$. If   $g_\ep(z)=\sum_{n\in\Z}a_n^{(\ep)} z^n$ for $z\in\T$, then 
$$\|T^nf(T)\|\le K\|f-g_\ep\|_W+\bigg\|\sum_{k\in\Z}a_k^{(\ep)} T^{k+n}\bigg\|<K\ep+\bigg\|\sum_{k\in\Z}a_k^{(\ep)} T^{k+n}\bigg\| $$
for all $n\ge0$, where $K=\sup_{n\ge0}\|T^n\|$. From the first part it follows that
$$\limsup_{n\to\infty}\|T^nf(T)\|\le K\ep,$$
and since $\ep>0$ was arbitrary this implies the result.
\end{proof}

Our second proof of Theorem~\ref{KTthm} is longer but it may be instructive in its own way. It is a slight variation of an argument given in \cite[Section~5.1]{CT04} in the setting of $C_0$-semigroups.   If $T$ is a bounded linear operator on a Banach space $X$ then a sequence $(x^*_n)_{n\in\Z}$ in the dual space $X^*$ of $X$ is a \emph{complete trajectory} for $T^*$ (through $x_0^*$) if $T^*x_n^*=x_{n+1}^*$ for all $n\in\Z$. If, in addition, $(x^*_n)_{n\in\Z}\in\ell^\infty(\Z;X^*)$ then $(x^*_n)_{n\in\Z}$ is a \emph{bounded complete trajectory} for $T^*$. Bounded complete trajectories play an important role in the theory of asymptotic stability, as the following result shows.

\begin{prop}\label{prp:ann}
Let $T$ be a power-bounded operator on a Banach space $X$, and let
$$X_0=\big\{x\in X:\|T^nx\|\to0\mbox{ as }n\to\infty\big\}.$$
  Let $M$ be the set of those $x^*\in X^*$ such that there exists a bounded complete trajectory for $T^*$ through $x^*$, and $M_\perp$ be the annihilator of $M$ in $X$. Then  $M_\perp=X_0$.
\end{prop}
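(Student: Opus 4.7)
The plan is to prove the two inclusions separately. The inclusion $X_0\subseteq M_\perp$ is almost immediate: if $x\in X_0$ and $x^*\in M$ with an associated bounded complete trajectory $(x_n^*)_{n\in\Z}$ through $x_0^*=x^*$, then iterating $T^*x_m^*=x_{m+1}^*$ yields $x^*=(T^*)^nx_{-n}^*$ for every $n\ge0$, so
$$
|\langle x,x^*\rangle| = |\langle T^nx,x_{-n}^*\rangle| \le \|T^nx\|\sup_{m\in\Z}\|x_m^*\| \to 0,
$$
which forces $\langle x,x^*\rangle=0$ and hence $x\in M_\perp$.

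For the converse inclusion $M_\perp\subseteq X_0$ I would argue by contrapositive: starting from $x\notin X_0$, I would construct some $x^*\in M$ with $\langle x,x^*\rangle\ne0$. Since $\|T^nx\|\not\to0$, there exist $\alpha>0$ and a sequence $n_k\to\infty$ with $\|T^{n_k}x\|\ge\alpha$. By the Hahn-Banach theorem I may pick $x_k^*\in X^*$ with $\|x_k^*\|\le1$ and $\langle T^{n_k}x,x_k^*\rangle=\|T^{n_k}x\|$. Setting $K:=\sup_{n\ge0}\|T^n\|<\infty$, I define for each $m\in\Z$ and each $k$ with $n_k+m\ge0$
$$
z_m^{(k)} := (T^*)^{n_k+m}x_k^*\in X^*,
$$
which satisfies $\|z_m^{(k)}\|\le K$ and $T^*z_m^{(k)}=z_{m+1}^{(k)}$ whenever both sides make sense.

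The crux of the proof is to extract from these finite "tails" a genuine two-sided bounded complete trajectory. Fixing a free ultrafilter $\mathcal{U}$ on $\N$, I would define
$$
\langle y,z_m^*\rangle := \lim_{\mathcal{U}}\langle y,z_m^{(k)}\rangle, \qquad y\in X,\ m\in\Z,
$$
which is well-defined because every scalar sequence on the right is bounded, and yields $z_m^*\in X^*$ with $\|z_m^*\|\le K$. The weak-$*$ continuity of $T^*$ passes the relation $T^*z_m^{(k)}=z_{m+1}^{(k)}$ to the $\mathcal{U}$-limit, giving $T^*z_m^*=z_{m+1}^*$ for all $m\in\Z$; thus $(z_m^*)_{m\in\Z}$ is a bounded complete trajectory for $T^*$, so $z_0^*\in M$. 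Finally,
$$
\langle x,z_0^*\rangle = \lim_{\mathcal{U}}\langle x,(T^*)^{n_k}x_k^*\rangle = \lim_{\mathcal{U}}\|T^{n_k}x\| \ge \alpha > 0,
$$
witnessing $x\notin M_\perp$.

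The main technical obstacle is the simultaneous extraction of weak-$*$ cluster points of $(z_m^{(k)})_k$ for every $m\in\Z$ through a single device. Using a free ultrafilter accomplishes this at a stroke without any separability assumption on $X$ or $X^*$; if $X$ were known to be separable the weak-$*$ topology on norm-bounded subsets of $X^*$ would be metrizable and a standard diagonal extraction could serve equally well. Once this step is in place the remaining bookkeeping is routine: Hahn-Banach "observers" at the times $n_k$ are pulled back through powers of $T^*$ and assemble, in the limit, into the two-sided trajectory that certifies $x\not\perp M$.
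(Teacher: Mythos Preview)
Your proof is correct. The easy inclusion is handled exactly as one would expect, and for the harder inclusion your ultrafilter construction produces a genuine bounded complete trajectory for $T^*$ that pairs nontrivially with $x$; the key points (eventual definedness of $z_m^{(k)}$ for each fixed $m$, weak-$*$ continuity of $T^*$ to pass the recurrence to the limit, and the lower bound $\langle x,z_0^*\rangle\ge\alpha$) all check out.

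The paper does not spell out a proof but refers to the argument via the \emph{limit isometry}: one forms the seminorm $q(x)=\limsup_{n\to\infty}\|T^nx\|$, passes to the completion of $X/X_0$, on which $T$ induces an isometry, and then extracts the required functional from the dual of this limit space (or of its invertible-isometry extension). Your approach is more direct: rather than building an auxiliary space and pulling functionals back from it, you manufacture the two-sided trajectory in $X^*$ itself by taking a single ultrafilter limit of the Hahn--Banach witnesses $(T^*)^{n_k+m}x_k^*$. This buys a shorter and entirely self-contained argument, with no separability hypothesis needed; the limit-isometry route, on the other hand, yields a reusable structural object (the isometry $V$ with $\sigma(V)\subseteq\sigma_u(T)$) that is useful elsewhere, for instance in V\~u's short proof of Corollary~\ref{KTcor}.
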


A proof of a more general statement may be found in~\cite[Theorem~3.1]{BaBrGr96}. The less straightforward (and, for us, more important) inclusion relies on the existence of an invertible \emph{limit isometry}; see for instance~\cite[Proposition~2.1]{Vu97}.

Given a sequence $\mathbf{x}^*=(x^*_n)_{n\in\Z}\in\ell^\infty(\Z;X^*)$ we associate with $\mathbf{x}^*$ a function $h_{\mathbf{x}^*}\colon\C\setminus\T\to X^*$ defined by 
$$h_{\mathbf{x}^*}(z)=\begin{cases}
\sum_{n=0}^\infty z^{-n-1}x_{n}^*,&\quad|z|>1,\\
-\sum_{n=1}^\infty z^{n-1}x_{-n}^*,&\quad|z|<1.
\end{cases}
$$
Note that $h_{\mathbf{x}^*}$ is holomorphic, and that 
\begin{equation} \label{hxest}
\|h_{\mathbf{x}^*}(z)\|\le\frac{\|\mathbf{x}^*\|_\infty}{|1-|z||},\qquad|z|\ne1.
\end{equation}
The mapping $\mathbf{x}^*\mapsto h_{\mathbf{x}^*}$ may be regarded as a discrete analogue of the Carleman transform.

\begin{proof}[Our second proof of Theorem~\ref{KTthm}]
We first show that $\|T^nf(T)x\|\to0$ as $n\to\infty$ for all $x\in X$, or equivalently that $\operatorname{Ran} f(T)\subseteq X_0$.   By Proposition~\ref{prp:ann} it suffices to prove that $x^*\in\Ker f(T)^*$ whenever there exists a bounded complete trajectory for $T^*$ through $x^*$. Let $\mathbf{x}^*=(x_n^*)_{n\in\Z}\in\ell^\infty(\Z;X^*)$ be  a bounded complete trajectory for $T^*$, and let $g\in W(\T)$ be given by $g(z)=\sum_{n\in\Z}a_nz^n$ for $z\in\T$, where $\sum_{n\in\Z}|a_n|<\infty$. Then 
$$\bigg\|f(T)^*x_0^*-\sum_{n\in\Z}a_nx_n^*\bigg\|\le\|f-g\|_W\|\mathbf{x}^*\|_\infty,$$
so by the spectral synthesis assumption it suffices to show that $\sum_{n\in\Z}a_nx_n^*=0$ whenever  $g$ vanishes on a neighbourhood of $\sigma_u(T)$. We observe that
$$\begin{aligned}
\sum_{n\in\Z}a_nx_n^*&=\frac{1}{2\pi}\lim_{r\to1-}\sum_{n\in\Z}\int_{-\pi}^\pi r^{|n+1|}e^{-in\theta}g(e^{i\theta})x_n^*\,\dd\theta\\
&=\frac{1}{2\pi}\lim_{r\to1-}\int_{-\pi}^\pi g(e^{i\theta})\bigg(\sum_{n=0}^\infty r^{n+1}e^{-in\theta}x_n^*+\sum_{n=1}^\infty r^{n-1}e^{in\theta}x_{-n}^*\bigg)\,\dd\theta\\
&=\frac{1}{2\pi}\lim_{r\to1-}\int_{-\pi}^\pi e^{i\theta}g(e^{i\theta})\big( h_{\mathbf{x}^*}(r^{-1}e^{i\theta})-h_{\mathbf{x}^*}(re^{i\theta})\big)\,\dd\theta.
\end{aligned}$$
In order to prove our claim we now show that the map $h_{\mathbf{x}^*}$ extends continuously to a neighbourhood of any point in $\rho(T)\cap\T$. We note first that for $w,z\in\C$ with $|w|<1$, $|z|>1$ we have
$$\begin{aligned}
(z-w)(zI-T^*)^{-1}&h_{\mathbf{x}^*}(w)=(w-z)\sum_{n=1}^\infty \sum_{k=0}^\infty w^{n-1}z^{-k-1}\mathbf{x}^*_{k-n}\\
&=(w-z)\sum_{n=1}^\infty w^{n-1}z^{-n}\bigg( \sum_{k=1}^{n} z^{k-1}x^*_{-k}+\sum_{k=0}^\infty z^{-k-1}x_k^*\bigg)\\
&=(w-z)\sum_{k=1}^\infty \sum_{n=k}^\infty w^{n-1}z^{k-n-1}x_{-k}^*-h_{\mathbf{x}^*}(z)\\
&=h_{\mathbf{x}^*}(w)-h_{\mathbf{x}^*}(z).
\end{aligned}$$
Using this identity twice and the fact that $h_{\mathbf{x}^*}(z)=(zI-T^*)^{-1}x_0^*$ gives
$$h_{\mathbf{x}^*}(w)-h_{\mathbf{x}^*}(z)=(z-w)^2(zI-T^*)^{-2}h_{\mathbf{x}^*}(w)+(z-w)(zI-T^*)^{-2}x_0^*$$
for $|w|<1$, $|z|>1$.
Let $\lambda\in\rho(T)\cap\T$.   Clearly $h_{\mathbf{x}^*}(z) \to (\l I - T^*)^{-1}x_0^*$ as $z \to \l,\,|z|>1$.  
 For $w \in \D$, choose $z_w $ such that $|z_w| > 1$ and $|z_w - w| \le 2(1-|w|)$, and let $w \to \lambda$.   
 Then $z_w \to \l$ and hence $h_{\mathbf{x}^*}(z_w)  \to (\l I - T^*)^{-1}x_0^*$.   Using \eqref{hxest}, we obtain
\[
 \|h_{\mathbf{x}^*}(w) -h_{\mathbf{x}^*}(z_w)\| 
\le 5 (1-|w|) \|\mathbf{x}^*\|_\infty \, \|(z_w I - T^*)^{-2}\| \to 0.
\] 
It follows that $h_{\mathbf{x}^*}(w) \to  (\l I - T^*)^{-1}x_0^*$.   Thus
the map $h_{\mathbf{x}^*}$ extends continuously to $\C \setminus \sigma_u(T)$. In fact, by the edge-of-the-wedge theorem \cite[Chapter~1]{Rud71} (or, more directly, by an application of Morera's theorem) $h_{\mathbf{x}^*}$ even extends holomorphically to $\C \setminus \sigma_u(T)$. By the dominated convergence theorem we have
$$\sum_{n\in\Z}a_nx_n^*=\frac{1}{2\pi}\lim_{r\to1-}\int_{-\pi}^\pi e^{i\theta}g(e^{i\theta})\big( h_{\mathbf{x}^*}(r^{-1}e^{i\theta})-h_{\mathbf{x}^*}(re^{i\theta})\big)\,\dd\theta=0$$
 whenever  $g\in W(\T)$ is the function given by $g(z)=\sum_{n\in\Z}a_nz^n$ for $z\in\T$ and $g$ vanishes on a neighbourhood of $\sigma_u(T)$, and we conclude that $\|T^nf(T)x\|\to0$ as $n\to\infty$ for all $x\in X$.

In order to convert this statement into the corresponding statement for the operator-norm topology, we may either consider the operator of left multiplication by $T$ on the Banach algebra $L(X)$, or we may alternatively proceed as follows. Suppose, for the sake of a contradiction, that $\|T^nf(T)\|\not\to0$ as $n\to\infty$. Then we may find a positive number $\ep$, an increasing sequence $(n_k)_{k\ge1}$ in $\Z_+$ and unit vectors $x_k\in X$ such that $\|T^{n_k}f(T)x_k\|\ge\ep$ for all $k\ge1$. Let $T_\infty$ denote the bounded linear operator on $\ell^\infty(\N;X)$ induced by $T$, so that $T_\infty\mathbf{y}=(Ty_k)_{k\ge1}$ for $\mathbf{y}=(y_k)_{k\ge1}\in\ell^\infty(\N;X)$. Then $T_\infty$ is power-bounded and $\sigma(T_\infty)=\sigma(T)$. It follows from what we proved above that $\|T_\infty^nf(T_\infty)\mathbf{y}\|_\infty\to0$ as $n\to\infty$ for all $\mathbf{y}\in\ell^\infty(\N;X)$. However, if we let $\mathbf{x}=(x_k)_{k\ge1}$, then $\mathbf{x}\in\ell^\infty(\N;X)$ and 
$$\|T_\infty^{n_k}f(T_\infty)\mathbf{x}\|_\infty\ge\|T^{n_k}f(T)x_k\|\ge\ep,\qquad k\ge1.$$
This contradiction completes the proof.
\end{proof}

\subsection{Variants}
Subsequently there have been many extensions and variants of the scope of Theorem \ref{KTthm}.    There are several different types of extensions.  One may seek to extend the result to functions $f$ which are not in $W^+(\D)$, but for which $f(T)$ can be defined, or to find conditions on $X$ and $T$ which allow us to replace the assumption of spectral synthesis by the weaker assumption that $f$ vanishes on $\sigma_u(T)$.  We will set out some extensions of these types in the remainder of this section.

 In \cite{AOR87}, Allan, O'Farrell and Ransford showed that if $f' \in W^+(\D)$ and $f$ vanishes on $\sigma_u(T)$ then $\lim_{n\to\infty} \|T^nf(T)\| = 0$.   Moreover, these properties are equivalent to $\sigma_u(T)$ having measure zero and $\sum_{n=0}^\infty z^n T^n f(T)$ converging to $(I-zT)^{-1} f(T)$ whenever $z^{-1} \in \rho(T) \cap \T$.

 In \cite{AR89}, Allan and Ransford extended the scope of Theorem \ref{KTthm} to the context of Banach algebras instead of operators.   Their method of proof involved a functional analytic construction in the context of Banach algebras, but overlapping with the proof in \cite{Vu92a}.    They obtained sharp upper bounds even in cases where spectral synthesis does not hold.   

The paper \cite{ESZ1} contains an abstract result concerning a Banach algebra $R$ satisfying $W(\T) \subseteq R  \subseteq C(\T)$ with continuous inclusions, and with the functions $z^n, \, n\in\Z$, having $R$-norm equal to $1$ and spanning a dense subspace of $R$.  The functions $f \in A(\D)$ such that $f^b \in R$, with the norm $\|f\|_{R^+} := \|f^b\|_R$, form a Banach algebra satisfying $W^+(\D) \subseteq R^+ \subseteq A(\D)$.   Assume that the operator $T$ satisfies
\[
\|p(T)\| \le \|p\|_R,
\]
for all polynomials $p$, so $T$ has a contractive $R^+$-calculus.     If $f\in R^+$ and $f$ is of spectral synthesis in $R$ with respect to $\sigma_u(T)$, then 
\[
\lim_{n\to\infty} \|T^nf(T)\| = 0.
\]
Theorem \ref{KTthm} is essentially the special case of this result when $R = W(\T)$ and $R^+ = W^+(\D)$.    
 At the other extreme where $R = C(\T)$ and $R^+ = A(\D)$, the assumption of a contractive $R^+$-calculus is stronger than polynomial boundedness, which is considerably stronger than power-boundedness.     

The following result for polynomially bounded operators was proved in \cite[Proposition 1.6]{KvN97}.  Spectral synthesis for $C(\T)$ is not needed here because every closed subset of $\T$ is a set of spectral synthesis for $C(\T)$.    The case when
\begin{equation} \label{vnin}
\|p(T)\| \le \|p\|_\infty
\end{equation}
 for all polynomials $p$ was covered by \cite[Theorem 2.10]{ESZ1}.

\begin{thm} \label{KvNthm}
Let $T$ be a polynomially bounded operator on a Banach space.   Let $f \in A(\mathbb D)$ and assume that $f$ vanishes on $\sigma_u(T)$.  Then 
\[
\lim_{n\to\infty} \|T^n f(T)\| = 0.
\]
\end{thm}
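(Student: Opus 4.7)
The plan is to reduce Theorem~\ref{KvNthm} to an integral-resolvent computation in the spirit of the first proof of Theorem~\ref{KTthm}, using that every closed subset of~$\T$ is trivially a set of spectral synthesis for $C(\T)$ (by a partition-of-unity construction). Polynomial boundedness of $T$ provides a bounded $A(\D)$-calculus $h\mapsto h(T)$ with some constant $C\ge 1$, giving $\|h(T)\|\le C\|h\|_\infty$ for $h\in A(\D)$ and $\|T^n\|\le C$. The case $f\equiv 0$ is trivial; otherwise, $f\in A(\D)\subseteq H^\infty(\D)$ is nonzero, so the F.\ and M.\ Riesz theorem forces $\sigma_u(T)$ to have Lebesgue measure zero. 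I would then pick $g_k\in C(\T)$ vanishing on open neighbourhoods $U_k\subseteq\T$ of $\sigma_u(T)$ with $\|g_k-f^b\|_\infty\to 0$.

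For each $k$, since $\T\setminus U_k$ is a compact subset of $\rho(T)$, the map $\theta\mapsto g_k(e^{i\theta})(e^{i\theta}I-T)^{-1}$ is operator-norm continuous on $\T$ (extended by $0$ on $U_k$), so
\[
S_n^{(k)}:=\frac{1}{2\pi}\int_{-\pi}^\pi e^{i(n+1)\theta}g_k(e^{i\theta})(e^{i\theta}I-T)^{-1}\,\dd\theta\in L(X)
\]
is well-defined, and by the Riemann-Lebesgue lemma in Banach spaces, $\|S_n^{(k)}\|\to 0$ as $n\to\infty$ for each fixed $k$. It then suffices to show that $\|T^n f(T)-S_n^{(k)}\|\le\epsilon_k$ holds uniformly in $n$ with $\epsilon_k\to 0$ as $k\to\infty$; the conclusion will follow via $\limsup_n\|T^n f(T)\|\le\epsilon_k\to 0$.

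To relate $S_n^{(k)}$ to $T^n f(T)$, I would substitute the Neumann series $(re^{i\theta}I-T)^{-1}=\sum_{m\ge 0}r^{-m-1}e^{-i(m+1)\theta}T^m$ for $r>1$ to obtain, for any $h\in C(\T)$,
\[
\frac{r^{n+1}}{2\pi}\int_{-\pi}^\pi e^{i(n+1)\theta}h(e^{i\theta})(re^{i\theta}I-T)^{-1}\,\dd\theta=\sum_{m\ge 0}r^{n-m}\hat h(m-n)T^m.
\]
For $h=f^b$, whose Fourier coefficients vanish at negative indices, the right-hand side collapses to $T^n\tilde f_r(T)$ with $\tilde f_r(z):=f(z/r)\in A(\D)$; since $\tilde f_r\to f$ uniformly on $\overline\D$ as $r\to 1+$, the calculus bound gives $T^n\tilde f_r(T)\to T^n f(T)$ in operator norm. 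For $h=g_k$, the left-hand side tends to $S_n^{(k)}$ as $r\to 1+$ by uniform convergence of the resolvent on $\T\setminus U_k$. Hence
\[
T^n f(T)-S_n^{(k)}=\lim_{r\to 1+}\frac{r^{n+1}}{2\pi}\int_{-\pi}^\pi e^{i(n+1)\theta}(f^b-g_k)(e^{i\theta})(re^{i\theta}I-T)^{-1}\,\dd\theta.
\]

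The main obstacle is to bound this limit uniformly in $n$, since the naive resolvent estimate $\|(re^{i\theta}I-T)^{-1}\|\le C/(r-1)$ blows up as $r\to 1+$. The plan is to split the integration domain and exploit two different effects: on $U_k$, where $g_k=0$ and $f^b-g_k=f^b$, the integrand is controlled by $\sup_{U_k}|f^b|$, which is small by continuity of $f^b$ and its vanishing on $\sigma_u(T)$; on $\T\setminus U_k$, the difference $|f^b-g_k|$ is controlled by $\|f^b-g_k\|_\infty$, and the resolvent remains bounded uniformly in $r\to 1+$, with a bound depending on the distance from $\T\setminus U_k$ to $\sigma_u(T)$. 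A careful balancing of these two contributions—using the flexibility to shrink $U_k$ and choose $g_k$ with $\|g_k-f^b\|_\infty$ as small as desired—should yield the required estimate $\epsilon_k\to 0$, closing the argument.
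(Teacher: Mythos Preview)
The paper does not prove Theorem~\ref{KvNthm}; it cites \cite[Proposition~1.6]{KvN97} and notes only that every closed subset of $\T$ is a set of spectral synthesis for $C(\T)$. The approach hinted at elsewhere (see the comment preceding Theorem~\ref{BSthm}) is via the limit isometry: one passes to an invertible isometry $V$ with $\sigma(V)\subseteq\sigma_u(T)$; polynomial boundedness transfers from $T$ to $V$, and since $V^{-1}$ is also an isometry this upgrades to a bounded $C(\T)$-calculus for $V$, whence $f^b(V)=0$ by the trivial spectral synthesis in $C(\T)$.

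Your adaptation of the first proof of Theorem~\ref{KTthm} has a genuine gap at precisely the point you flag as the main obstacle. The estimate on $\T\setminus U_k$ is fine, but on $U_k$ the claim that the contribution is ``controlled by $\sup_{U_k}|f^b|$'' is unsupported: you would need a uniform-in-$r$ bound on $\int_{U_k}\|(re^{i\theta}I-T)^{-1}\|\,d\theta$, and none is available---for a power-bounded operator the resolvent can blow up like $|\theta|^{-\alpha}$ near a point of $\sigma_u(T)$ with $\alpha$ arbitrarily large (see Section~\ref{sect3.4}), so the product $|f^b(e^{i\theta})|\cdot\|(e^{i\theta}I-T)^{-1}\|$ need not even be locally integrable. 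Nor is there anything to balance: once $U_k$ is fixed, the $U_k$-integral is independent of $g_k$, and shrinking $U_k$ only worsens $M_k$ on the complementary piece without producing any compensating smallness. In fact, with the natural choice of $g_k$ (equal to $f^b$ off $U_k$, smoothly cut off near $\partial U_k$), the uniform bound $\|T^nf(T)-S_n^{(k)}\|\le\epsilon_k$ that you need becomes essentially the theorem itself. What makes the first proof of Theorem~\ref{KTthm} work is that the $W(\T)$-norm of $f-g_\epsilon$ directly controls $\big\|T^nf(T)-\sum_k a_k^{(\epsilon)}T^{n+k}\big\|$ through power-boundedness; the sup-norm does not, and polynomial boundedness cannot serve as a substitute because the approximants $g_k$ must carry nonzero negative Fourier coefficients (a nonzero function in $A(\D)$ cannot vanish on an open arc of~$\T$).
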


Polynomial boundedness is quite common on Hilbert spaces.    If $T$ is a contraction on a Hilbert space $X$, then von Neumann's inequality shows that \eqref{vnin} holds, and Theorem \ref{KvNthm} is applicable.    There are other variants of Theorem \ref{KTthm} for operators on Hilbert spaces as follows. 

The first result, due to Bercovici \cite{Ber90}, concerns completely non-unitary contractions on Hilbert spaces.  Such operators have a bounded $H^\infty(\D)$-calculus defined in the Sz.-Nagy-Foias functional calculus.

\begin{thm}  \label{Berthm}
Let $T$ be a completely non-unitary contraction on a Hilbert space.  Let $f \in H^\infty(\D)$ and assume that $f$ vanishes on $\sigma_u(T)$.   Then $\lim_{n\to\infty} \|T^nf(T)\| = 0$.
\end{thm}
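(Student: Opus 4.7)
The plan is to invoke the Sz.-Nagy--Foias $H^\infty(\D)$-functional calculus, which for a completely non-unitary contraction $T$ on a Hilbert space is contractive, $\|g(T)\|\le\|g\|_\infty$ for every $g\in H^\infty(\D)$, and enjoys a bounded pointwise convergence property: if $(g_k)_{k\ge1}\subset H^\infty(\D)$ is uniformly bounded and $g_k\to g$ pointwise on $\D$, then $g_k(T)\to g(T)$ in the strong operator topology. I would combine these facts with Theorem~\ref{KvNthm}, which applies to any $g\in A(\D)$ vanishing on $\sigma_u(T)$, since Hilbert-space contractions are polynomially bounded by von Neumann's inequality.

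The main step is an approximation. Because the hypothesis that some non-zero $f\in H^\infty(\D)$ vanishes on $\sigma_u(T)$ forces $\sigma_u(T)$ to have Lebesgue measure zero (cf.\ observation (i) after Theorem~\ref{KTthm}), I plan to build a bounded sequence $(f_k)_{k\ge1}\subset A(\D)$ with each $f_k$ vanishing on $\sigma_u(T)$ and $f_k\to f$ pointwise on $\D$. For each fixed $k$, Theorem~\ref{KvNthm} then gives $\|T^n f_k(T)\|\to 0$ as $n\to\infty$; the bounded-convergence property yields $f_k(T)\to f(T)$ in the strong operator topology; and the monotonicity $\|T^{n+1}f(T)\|\le\|T^nf(T)\|$, valid because $T$ is a contraction, can be combined with a diagonal argument and a weak-compactness argument on the Hilbert-space unit ball to pass from strong convergence on a dense set to operator-norm convergence for $T^nf(T)$.

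The hardest step is the construction of the approximants $f_k$. The Rudin--Carleson theorem supplies a peak function $h\in A(\D)$ with $h=0$ on $\sigma_u(T)$ and $|h|<1$ elsewhere on $\overline{\D}$, and the dilates $f_r(z):=f(rz)$ lie in $A(\D)$ and converge boundedly and pointwise to $f$ as $r\to1-$. However, naive products such as $h\cdot f_r$ converge to $hf$ rather than to $f$, which would force an additional factor of $h$ to survive in the limit. To recover $f$ itself one must exploit the inner-outer factorisation $f=f_if_o$, treating the factors separately: the inner factor is handled via Frostman's theorem, which allows one to approximate $f_i$ by Blaschke products lying in $A(\D)$ and still vanishing on $\sigma_u(T)$, while the outer factor is treated via a direct analysis of boundary behaviour, using that $\log|f_o^b|$ is integrable and that the non-tangential limit of $f$ vanishes on $\sigma_u(T)$. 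This inner-outer analysis, which has no counterpart in the Banach-space setting, is the Hilbert-space-specific heart of the argument and is precisely where the full $H^\infty$-calculus (as opposed to the polynomial boundedness used in Theorem~\ref{KvNthm}) becomes indispensable.
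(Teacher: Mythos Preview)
The paper does not give its own proof of Theorem~\ref{Berthm}; it merely records the statement and attributes it to Bercovici~\cite{Ber90}, so there is no in-paper argument to compare against. Evaluated on its own terms, your plan has a genuine gap at the final step, and the approximation step is also shakier than you suggest.

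The decisive problem is the passage from strong to norm convergence. From bounded pointwise convergence $f_k\to f$ on $\D$ the Sz.-Nagy--Foias calculus gives only $f_k(T)\to f(T)$ in the strong operator topology, and combining this with $\|T^nf_k(T)\|\to0$ for each fixed $k$ yields, for every $x$,
\[
\limsup_{n\to\infty}\|T^nf(T)x\|\le\|(f(T)-f_k(T))x\|\xrightarrow[k\to\infty]{}0,
\]
so $T^nf(T)\to0$ strongly. But monotonicity of $\|T^nf(T)\|$ together with strong convergence to zero does \emph{not} force norm convergence, and no diagonal or weak-compactness argument on the unit ball can repair this: the almost-maximising unit vectors $x_n$ have no useful weak limit (weak lower semicontinuity of the norm points the wrong way). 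A concrete obstruction is the c.n.u.\ contraction $S=T^*\otimes T$ on $\ell^2(\N)\otimes\ell^2(\N)$ with $T$ the unilateral shift: here $S^n\to0$ strongly, $\|S^n\|$ is non-increasing, yet $\|S^n\|=1$ for all $n$. Your proposed mechanism would apply verbatim to $S^n=S^n\cdot I$ and ``prove'' a false statement; it therefore cannot be the right engine. The $\ell^\infty$-lifting trick from the paper's second proof of Theorem~\ref{KTthm} is unavailable too, since $\ell^\infty(\N;H)$ is not a Hilbert space and the c.n.u.\ hypothesis (hence the $H^\infty$-calculus) does not transfer.

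Separately, the approximation in step~1 is not as accessible as your sketch implies. Frostman's theorem produces Blaschke products, but infinite Blaschke products are typically not in $A(\D)$, so this does not by itself yield disc-algebra approximants vanishing on $\sigma_u(T)$; the inner--outer route would need substantially more work. Bercovici's own argument sidesteps both difficulties by working from the outset inside a commutative Banach algebra (the image of the $H^\infty$-calculus), forming the limit-isometry quotient there, and identifying the image of $f(T)$ with the Gelfand transform of $f$ on a spectrum contained in $\sigma_u(T)$; no SOT-to-norm upgrade is ever required.
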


The next result is due to L\'eka \cite[Theorem 2.1]{Lek09}, and it was obtained by using ergodic properties.

\begin{thm}  \label{Lekthm}
Let $T$ be a power-bounded operator on a Hilbert space.  Let $f \in W^+(\D)$ and assume that $f$ vanishes on $\sigma_u(T)$.   Then $\lim_{n\to\infty} \|T^nf(T)\| = 0$.
\end{thm}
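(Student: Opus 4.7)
The plan is to adapt the complete-trajectory framework from our second proof of Theorem~\ref{KTthm}, with the spectral theorem for unitary operators on a Hilbert space replacing the spectral synthesis hypothesis.

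After reducing the operator-norm statement to a strong-convergence statement via the $\ell^\infty(\N;X)$-trick from the second proof of Theorem~\ref{KTthm}, identify $X^*$ with $X$ via the Riesz representation and invoke Proposition~\ref{prp:ann} to reduce the problem to showing that $f(T)^* x_0=0$ for every $x_0\in X$ admitting a bounded complete trajectory $(x_n)_{n\in\Z}$ for $T^*$.

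The limit isometry construction underpinning Proposition~\ref{prp:ann} (see~\cite[Proposition~2.1]{Vu97}) yields, on an auxiliary Hilbert space $H$, an invertible isometry $V$ with $\sigma(V)\subseteq\sigma_u(T^*) = \{\bar\zeta:\zeta\in\sigma_u(T)\}$ and vectors $\tilde x_n\in H$ with $V\tilde x_n=\tilde x_{n+1}$. Because $V$ is unitary, the spectral theorem yields a positive Borel measure $\mu$ on $\T$, supported in $\sigma(V)$, with $\|p(V)\tilde x_0\|_H^2=\int_\T |p(\zeta)|^2\,\dd\mu(\zeta)$ for all polynomials $p$. Extending by density to $\tilde f(z):=\sum_{n\ge0}\bar a_n z^n\in W^+(\D)$ and noting that $\tilde f^b(\zeta)=\overline{f^b(\bar\zeta)}$ vanishes on $\mathrm{supp}\,\mu$ (since $\bar\zeta\in\sigma_u(T)$ for every $\zeta\in\mathrm{supp}\,\mu$, on which $f^b$ vanishes by hypothesis), one obtains $\|\tilde f(V)\tilde x_0\|_H^2=0$.

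The main obstacle is transferring this to the required vanishing $f(T)^*x_0=0$ in $X$: the limit-isometry map $X\to H$ is in general only a contractive quotient, so one has to exploit the Hilbert geometry of $X$ to show that its restriction to the closed linear span of the orbit $(x_n)_{n\in\Z}$ is in fact isometric. This is precisely the place where the Hilbert space assumption is indispensable, since the quadratic structure of the norm allows the limit seminorm defining $V$ to be realised via an ergodic averaging of the ambient inner product, preserving norms of finite linear combinations of orbit vectors. Once this compatibility is in hand, the spectral computation in $H$ delivers $f(T)^*x_0=0$ without any spectral synthesis assumption, and the $\ell^\infty$-trick upgrades strong to operator-norm convergence exactly as in the second proof of Theorem~\ref{KTthm}.
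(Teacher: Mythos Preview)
The paper does not itself prove Theorem~\ref{Lekthm}; it cites L\'eka~\cite{Lek09} and notes only that the result was obtained ``by using ergodic properties,'' so there is no in-paper argument to compare against directly. Your proposal nonetheless has a genuine gap. The $\ell^\infty(\N;X)$-trick in the second proof of Theorem~\ref{KTthm} works there because the strong-convergence argument it feeds into uses only power-boundedness and the spectral-synthesis hypothesis, both of which transfer unchanged from $(T,X)$ to $(T_\infty,\ell^\infty(\N;X))$. Your strong-convergence argument, by contrast, rests essentially on Hilbert-space structure: the Riesz identification of $X^*$ with $X$, the spectral theorem for the unitary $V$, and the quadratic structure of the norm you invoke to make the quotient map isometric on the orbit. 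But $\ell^\infty(\N;X)$ is \emph{not} a Hilbert space even when $X$ is, so after the reduction you can no longer run your argument; and if instead you first prove strong convergence on $X$ and then attempt the upgrade, the trick still requires re-establishing strong convergence for $T_\infty$ on the non-Hilbert space $\ell^\infty(\N;X)$. Either way the passage from strong to operator-norm convergence fails as written. This is not a cosmetic issue: finding a route to the operator-norm statement that stays within the Hilbert-space category is precisely the difficulty that separates Theorem~\ref{Lekthm} from Theorem~\ref{KTthm}.

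A smaller concern is that the assertion that the limit-isometry map is isometric on the closed span of the trajectory $(x_n)_{n\in\Z}$ is stated rather than argued. The Banach-limit construction of the semi-inner product does yield a Hilbert space $H$ when $X$ is Hilbert, but the canonical map $X\to H$ is in general only contractive; it is not evident why the existence of a bounded complete $T^*$-trajectory through $x_0$ should force it to preserve norms on the span of that orbit, and this is the step on which the transfer of $\tilde f(V)\tilde x_0=0$ back to $f(T)^*x_0=0$ hinges.
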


The third result is due independently to Mustafayev \cite{Mus10} and Zarrabi \cite[Theorem 3.2]{Zar13}.

\begin{thm} \label{Zarthm}
Let $T$ be a contraction on a Hilbert space with spectral radius $1$, and let $f \in A(\D)$.   Then $\lim_{n\to\infty} \|T^n f(T)\| = \max \{|f(\l)| : \l \in \sigma_u(T)\}$.
\end{thm}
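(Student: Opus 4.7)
The plan is to establish the two inequalities in the claimed identity separately, writing $M := \max\{|f(\lambda)| : \lambda \in \sigma_u(T)\}$. For the lower bound, recall that $T$ is polynomially bounded (by von Neumann's inequality on Hilbert space), so it admits a bounded $A(\D)$-calculus, and the spectral mapping theorem gives $\sigma(T^n f(T)) = \{z^n f(z) : z \in \sigma(T)\}$ for every $n \ge 0$. Since $T$ has spectral radius $1$, the set $\sigma_u(T)$ is nonempty, and for any $\lambda \in \sigma_u(T)$ one has $|\lambda^n f(\lambda)| = |f(\lambda)|$, so $\|T^n f(T)\| \ge r(T^n f(T)) \ge M$ for every $n$.

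For the upper bound the strategy is to reduce to Theorem \ref{KvNthm} through a careful interpolation step. Given $\ep > 0$, I would try to produce $g \in A(\D)$ with $g = f$ on $\sigma_u(T)$ and $\|g\|_\infty \le M + \ep$. If such a $g$ exists, then $f - g \in A(\D)$ vanishes on $\sigma_u(T)$, so Theorem \ref{KvNthm} yields $\|T^n(f-g)(T)\| \to 0$, while von Neumann's inequality gives $\|T^n g(T)\| \le \|g(T)\| \le \|g\|_\infty \le M + \ep$. Combining these and letting $\ep \to 0$ gives $\limsup_{n\to\infty} \|T^n f(T)\| \le M$, which together with the lower bound completes the proof.

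The main obstacle is the construction of the interpolant $g$. When $\sigma_u(T)$ is a closed subset of $\T$ of Lebesgue measure zero, this follows immediately from the Rudin-Carleson peak-interpolation theorem applied to $f|_{\sigma_u(T)}$, which yields $g \in A(\D)$ with $g = f$ on $\sigma_u(T)$ and $\|g\|_\infty = M$. When $\sigma_u(T)$ has positive Lebesgue measure, however, boundary uniqueness for $H^\infty$ forces any such $g$ to coincide with $f$ identically, and the naive interpolation approach collapses. To navigate this regime I would invoke the Wold decomposition $T = T_u \oplus T_{cnu}$: the unitary summand is controlled directly by the continuous functional calculus, since $\|T_u^n f(T_u)\| = \sup_{\sigma(T_u)} |f| \le M$ for every $n$, and the completely non-unitary summand is handled through Bercovici's Theorem \ref{Berthm} together with the Sz.-Nagy-Foias functional model. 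In the extremal subcase $\sigma_u(T_{cnu}) = \T$ one has $M = \|f\|_\infty$ and the trivial bound $\|T_{cnu}^n f(T_{cnu})\| \le \|f\|_\infty$ already suffices; producing the correct estimate in the intermediate positive-but-incomplete-measure subcase is the heart of the proof and is precisely where one must exploit the Hilbert-space structure of $T$ beyond polynomial boundedness, for instance via the model space or through an analysis of the isometric asymptote of $T$.
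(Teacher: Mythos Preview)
The paper does not supply its own proof of Theorem~\ref{Zarthm}; the result is quoted with references to Mustafayev and Zarrabi, so there is nothing in the text to compare against and your proposal stands or falls on its own.

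Your lower bound is correct, as is the upper bound when $\sigma_u(T)$ has Lebesgue measure zero: Rudin--Carleson interpolation followed by Theorem~\ref{KvNthm} works exactly as you say. (Terminological slip: the splitting $T=T_u\oplus T_{cnu}$ is the Sz.-Nagy decomposition of a contraction, not the Wold decomposition.) The genuine gap is the one you yourself identify. When $\sigma_u(T_{cnu})$ is a proper closed subset of $\T$ of positive Lebesgue measure---and such c.n.u.\ contractions do occur, for instance compressed shifts whose inner function has boundary spectrum a fat Cantor set---boundary uniqueness kills every interpolation approach in $A(\D)$ or $H^\infty$, and your closing sentence (``via the model space or through an analysis of the isometric asymptote'') is a placeholder rather than an argument. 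All the content of the theorem beyond Theorem~\ref{KvNthm} lives in precisely this case, so as written the proposal is incomplete.

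The route taken in the cited papers avoids the case split. Since $T$ is a contraction the sequence $(\|T^n f(T)\|)_{n\ge0}$ is non-increasing, so the limit exists; one then passes to a limit isometry (which on Hilbert space can be arranged to be unitary, with spectrum contained in $\sigma_u(T)$) and identifies the limit with a spectral-theorem quantity. This is exactly the machinery you name at the end, and the point is that it does the whole job uniformly: it should replace the interpolation argument, not merely rescue it in a residual case.
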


\subsection{Analytic Besov functions}
A new result concerning so-called ``analytic Besov functions'' has been obtained very recently in \cite{BS22}.  Let $\B(\D)$ be the space of  holomorphic functions on $\D$ such that
\begin{equation} \label{bddef}
\|f\|_{\B_0} := \int_0^1 \sup_{|\theta|\le\pi} |f'(re^{i\theta})| \,dr < \infty.
\end{equation}
These functions are uniformly continuous on $\D$, so they extend to functions in $A(\D)$. 
Then $\B(\D)$ is a Banach algebra in the norm
\[
\|f\|_\B := \|f\|_\infty + \|f\|_{\B_0}, \qquad  f \in \B(\D).
\]
The functions $z^k, k\in\Z_+$, belong to $\B(\D)$, with $\B$-norm equal to $2$ for $k\ge1$.   Thus there are continuous inclusions of $W^+(\D)$ in $\B(\D)$ and of $\B(\D)$ in $A(\D)$.   The inclusions are proper, and the polynomials are dense in $\B(\D)$.   

An alternative definition is that $\B(\D)$ is the space of all functions $f \in H^\infty(\D)$ whose boundary functions $f^b$ belong to a certain Besov space on $\T$, usually denoted by $B^0_{\infty1}(\T)$.   Then $\|f\|_\B$ is equivalent to $\|f^b\|_{B^0_{\infty1}}$ for any of the standard norms on $B^0_{\infty1}(\T)$.

A function in $\B(\D)$ satisfies the reproducing formula
\begin{equation} \label{rep}
f(z) = f(0) + \frac{2}{\pi} \int_\D \log \left( \frac{1}{|w|} \right) \frac{zf'(w)}{(1-z\overline{w})^2} \,dA(w),
\end{equation}
where $dA$ denotes area measure on $\D$.   

The formula \eqref{rep} can be adapted to create functions in $\B(\D)$ as follows.   Let $g : \D \to \C$ be a continuous function, such that
\[
\int_0^1 \sup_{|\theta|<\pi} |g(r e^{i\theta})| dr < \infty.
\]
Define 
\begin{equation} \label{gtof}
f(z) := \frac{2}{\pi} \int_\D \log \left( \frac{1}{|w|} \right) \frac{zg(w)}{(1-z\overline{w})^2} \,dA(w), \qquad z \in \D.
\end{equation}
This integral is absolutely convergent, and $f \in \B(\D)$, as shown in \cite[Proposition 2.2]{BS22}.

Since the  polynomials are dense in $\B(\D)$, a bounded operator $T$ on a Banach space has a bounded $\B(\D)$-calculus (or $\B$-calculus for short) if and only if there is a constant $C$ such that 
\begin{equation} \label{Best}
\|p(T)\| \le C\|p\|_\B
\end{equation}
for all polynomials $p$.    In particular, $T$ must be power-bounded.    We will present an alternative characterisation of operators $T$ with a bounded $\B$-calculus in terms of a condition on the resolvent of $T$, and this will provide an alternative definition of the calculus.

Assume that $T$ has spectral radius at most $1$.  In order to define $f(T)$ for $f \in \B(\D)$, one might try to replace $z$ by $T$ in \eqref{rep}.   Then we would have an equation involving an operator-valued integral, which might not be absolutely convergent.   In order to make convergence more likely, the operator-valued integral can be replaced by scalar-valued functions, and so we would hope that $f(T)$ would satisfy
\begin{equation} \label{dBfc}
\langle f(T)x, x^* \rangle = f(0) \langle x,x^* \rangle 
 + \frac{2}{\pi} \int_\D \log \left( \frac{1}{|w|} \right) f'(w) \langle T(I-\overline{w}T)^{-2}x, x^* \rangle \,dA(w)
 \end{equation}
for all $x \in X$ and $x^* \in X^*$, with the integrals being absolutely convergent.   We will say that $T$ {\it satisfies the {\rm(dGSF)} condition} if, for all $x\in X$ and $x^* \in X^*$,
\begin{equation} \label{edef}
\sup_{0<r<1} (1-r) \int_{-\pi}^\pi \big| \langle (T(I-re^{i\theta}T)^{-2}x, x^* \rangle \big| \, d\theta < \infty.
\end{equation}
When this holds, the integrals in \eqref{dBfc} are absolutely convergent, and the Closed Graph Theorem shows that the supremum in \eqref{edef} is bounded above by $\gamma_T \|x\|\,\|x^*\|$ for some constant $\gamma_T$.   Then the finiteness of  $\gamma_T$ and $\|f\|_{\B_0}$ imply that there is a bounded linear operator $f(T)$ from $X$ to $X^{**}$ satisfying \eqref{dBfc}.   For $f_k(z) = z^k, \, k\in\Z_+$, it is not difficult to verify that $f_k(T) = T^k$.   Then by linearity, density of the polynomials in $\B(\D)$ and continuity of the map $f \mapsto f(T)$ in the $\B$-norm, it follows that $f(T)$ maps $X$ into $X$ for all $f \in \B(\D)$ and the map $f \mapsto f(T)$ is a (unique) $\B$-calculus for $T$.

The process described above is adapted from a similar process for generators of bounded $C_0$-semigroups in \cite{BGT}, for which the argument was more complicated.  It was shown in \cite{BGT2} that the (GSF) condition, which is the continuous analogue of the (dGSF) condition, is not only sufficient for the existence of a $\B$-calculus, but it is also necessary.  This is also true in the discrete case.   This construction of the $\B$-calculus for power-bounded operators first appeared in the literature in \cite[Theorem 2.10, Propositions 3.1 and 3.3]{Arn21}.  Without being aware of that paper, we wrote a variant of the argument in \cite[Theorems 3.2 and 3.5, Corollary 3.3]{BS22}. 

\begin{thm} \label{BSthm0}
Let $T$ be a bounded linear operator on a Banach space $X$, with spectral radius at most $1$.   Then  $T$ has a (unique) bounded $\B$-calculus if and only if $T$ is power-bounded and satisfies the {\rm (dGSF)} condition.  In that case, the following hold:
\begin{enumerate}[\rm(a)]
\item  {\rm \eqref{dBfc}} is satisfied for all $f \in \B(\D)$, $x \in X$ and $x^* \in X^*$.
\item If $f \in W^+(\D)$, then  $f(T) = \sum_{k=0}^\infty a_k T^k$, with convergence in the operator norm.
\item If $f \in \B(\D)$ with Taylor series  $f(z) = \sum_{k=0}^\infty a_kz^k$, then
\[
f(T) = \lim_{r\to1-} \sum_{k=0}^\infty a_k r^k T^k,
\]
with convergence in the operator norm.
\end{enumerate}
\end{thm}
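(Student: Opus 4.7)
The plan has three pieces: the sufficiency of power-boundedness plus (dGSF), the necessity of both conditions, and the statements (a)--(c). The sufficiency direction is essentially sketched in the paragraph preceding the theorem, so I would only spell out the remaining verifications; the main effort goes into the necessity direction, whose crux is a uniform Besov-norm estimate for a family of test functions.

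\emph{Sufficiency.} Assuming $T$ is power-bounded and satisfies (dGSF) with constant $\gamma_T$, the Closed Graph Theorem turns \eqref{dBfc} into the definition of a bounded operator $f(T) \in L(X,X^{**})$ of norm at most $|f(0)| + \gamma_T\|f\|_{\B_0}$. To identify $f_k(T)$ with $T^k$ for the monomial $f_k(z)=z^k$ with $k\geq 1$, I would expand $T(I-\overline{w}T)^{-2}=\sum_{n\geq 1}n\overline{w}^{n-1}T^n$ (convergent in operator norm for $|w|<1$ by power-boundedness), apply Fubini in \eqref{dBfc}, and evaluate the resulting integrals $\int_\D \log(1/|w|)w^{k-1}\overline{w}^{n-1}\,dA(w)$ in polar coordinates; only the $n=k$ term survives. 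Linearity and density of the polynomials in $\B(\D)$ then extend the map to a bounded linear $f\mapsto f(T)\in L(X)$, and multiplicativity is verified on polynomials and passed to $\B(\D)$ by continuity.

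\emph{Necessity.} Suppose $T$ has a bounded $\B$-calculus with constant $C$. Power-boundedness is immediate from $\|T^k\|=\|f_k(T)\|\leq C\|f_k\|_\B=2C$. For the (dGSF) bound, fix $0<r<1$, $x\in X$ and $x^*\in X^*$, write $h_r(\theta):=\langle T(I-re^{i\theta}T)^{-2}x,x^*\rangle$, and pick a measurable unimodular $\varphi$ with $\varphi h_r=|h_r|$. Introduce
\[
g_r(z):=(1-r)\int_{-\pi}^{\pi}\varphi(\theta)\frac{z}{(1-re^{i\theta}z)^2}\,\dd\theta, \qquad z\in\D.
\]
Since $g_r$ extends holomorphically past $\overline{\D}$ it lies in $W^+(\D)$; expanding the kernel as $\sum_{n\geq 1}n(re^{i\theta})^{n-1}z^{n-1}$ and applying Fubini yields $\langle g_r(T)x,x^*\rangle = (1-r)\int_{-\pi}^\pi|h_r(\theta)|\,\dd\theta$, where $g_r(T)$ is computed through its Taylor series (which converges in $\B$-norm, hence in operator norm via the calculus). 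The key technical step is a uniform bound $\|g_r\|_\B\leq M$ independent of $r$ and $\varphi$: differentiating under the integral gives $|g_r'(se^{i\psi})|\leq 2(1-r)\int|1-rse^{i(\theta+\psi)}|^{-3}\,\dd\theta$, and the classical Hardy-type estimate $\int_{-\pi}^\pi|1-\rho e^{i\phi}|^{-p}\,\dd\phi\lesssim(1-\rho)^{1-p}$ for $p>1$, followed by the substitution $u=rs$ in $\int_0^1\sup_\psi|g_r'(se^{i\psi})|\,\dd s$, gives $\|g_r\|_{\B_0}\lesssim 1$; an analogous argument with exponent $2$ controls $\|g_r\|_\infty$. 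The bounded calculus then delivers $(1-r)\int|h_r(\theta)|\,\dd\theta\leq CM\|x\|\,\|x^*\|$, which is exactly (dGSF).

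\emph{Supplementary statements and main obstacle.} For (a), both sides of \eqref{dBfc} are continuous in the $\B$-norm (the right by (dGSF) and $\|f\|_{\B_0}\leq\|f\|_\B$, the left by boundedness of the calculus), so agreement on polynomials extends to $\B(\D)$. For (b), the continuous embedding $W^+(\D)\hookrightarrow\B(\D)$ (with $\|f\|_\B\leq 2\|f\|_W$, since $\|z^k\|_\B\leq 2$) forces the Taylor partial sums of $f\in W^+(\D)$ to converge to $f$ in $\B$-norm, whence $\sum_{k\geq 0}a_kT^k\to f(T)$ in operator norm. For (c), I would use the dilations $f_s(z):=f(sz)$, which belong to $W^+(\D)$ for $s<1$ and satisfy $f_s\to f$ in $\B(\D)$ as $s\to 1-$ by a standard dominated-convergence argument based on \eqref{bddef} together with the density of polynomials; combining this with (b) yields $f_s(T)=\sum_{k\geq 0}a_ks^kT^k\to f(T)$ in operator norm. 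The main obstacle throughout is the uniform Besov-norm bound on $g_r$ in the necessity direction: it is the place where the reproducing kernel $z/(1-\overline{w}z)^2$ of $\B(\D)$ and the specific form of $\|\cdot\|_{\B_0}$ interact in a non-trivial way, and it is the linchpin on which the equivalence rests.
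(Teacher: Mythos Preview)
Your proposal is correct. For sufficiency and for parts (a)--(c), you are fleshing out precisely the sketch the paper gives in the paragraphs preceding the theorem (the construction of $f(T)$ via \eqref{dBfc}, the identification $f_k(T)=T^k$ by expanding $T(I-\overline{w}T)^{-2}$ and integrating in polar coordinates, and the extension by density of polynomials); the arguments for (b) and (c) via the continuous embedding $W^+(\D)\hookrightarrow\B(\D)$ and the contractivity of dilations $f\mapsto f_s$ on $\B(\D)$ are the natural ones.

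For the necessity direction the paper itself does not give a proof, referring instead to \cite{Arn21} and \cite{BS22}. Your argument---building the one-parameter family of test functions
\[
g_r(z)=(1-r)\int_{-\pi}^{\pi}\varphi(\theta)\,\frac{z}{(1-re^{i\theta}z)^2}\,\dd\theta
\]
so that $\langle g_r(T)x,x^*\rangle$ recovers the (dGSF) integrand, and then bounding $\|g_r\|_\B$ uniformly via the standard estimate $\int_{-\pi}^{\pi}|1-\rho e^{i\phi}|^{-p}\,\dd\phi\lesssim(1-\rho)^{1-p}$---is valid and is closely related to the reproducing-kernel device \eqref{gtof} that the paper singles out as the mechanism for producing elements of $\B(\D)$. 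The difference is cosmetic: you integrate the kernel over a single circle $|w|=r$ rather than over the full disc with the weight $\log(1/|w|)$, which makes the sup-norm and $\B_0$-norm estimates slightly more direct (the Poisson-type integral with exponent $2$ handles $\|g_r\|_\infty$, exponent $3$ handles $\|g_r\|_{\B_0}$). Both routes hinge on the same interaction between the kernel $z(1-\overline{w}z)^{-2}$ and the $\B_0$-seminorm that you rightly identify as the crux.
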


There is also a spectral mapping theorem for the $\B$-calculus;  see \cite[Theorem 3.8]{BS22}.
 
\begin{exas} \label{exPV}
(a)   Let $T$ be a power-bounded operator on a Hilbert space.   Using Fourier analysis, Peller \cite{Pel82} proved that $T$ satisfies the condition \eqref{Best}, so $T$  has a bounded $\B$-calculus.   Alternatively it is easy to show that $T$ satisfies the (dGSF) condition by using Parseval's formula for functions with values in a Hilbert space.   

(b) A bounded linear operator $T$ on a Banach space is a \emph{Ritt operator} if its spectral radius is at most $1$ and it satisfies the \emph{Ritt resolvent condition}, namely  
\begin{equation}\label{eq:Ritt_res}
\|(zI-T)^{-1}\|\le\frac{C}{|z-1|},\qquad |z|>1,
\end{equation}
for some constant $C$.
Vitse proved in \cite{Vit05} that $T$ has a bounded $\B$-calculus, by using harmonic analysis to establish the estimate \eqref{Best}.    
The approach used above provides a short and explicit way to establish the $\B$-calculus.   Using the estimate \eqref{eq:Ritt_res} it is very easy to show that the (dGSF) condition is satisfied.  
 Moreover, $f(T)$ can be written as an operator-valued integral:
\[
f(T) = f(0)I + \frac{2}{\pi} \int_\D \log \left( \frac{1}{|w|} \right) f'(w) T(I-\overline{w}T)^{-2} \, dA(w).
\]

For any Ritt operator, $\sigma_u(T) \subseteq \{1\}$, so only the special case of Katznelson-Tzafriri theory where $f(z) = 1-z$ is of interest for Ritt operators.  We will discuss that case in more detail in Section \ref{sect3}, and in particular the theory for Ritt operators in Section \ref{sect3.3}.
\end{exas}

The following theorem of Katznelson-Tzafriri type, for operators with a bounded $\B$-calculus, is proved in \cite[Theorem 1.5]{BS22}.   The proof uses the approach to Theorem \ref{KTthm} and Theorem \ref{KvNthm} via the limit isometry, and some manipulations set out in \cite[Section 2]{CG08}.   

\begin{thm} \label{BSthm}
Let $T$ be a power-bounded operator on a Banach space, and assume that $T$ has a bounded $\B$-calculus.   Let $f \in \B(\D)$, and assume that $f$ vanishes on $\sigma_u(T)$.   Then $\lim_{n\to\infty} \|T^nf(T)\|=0$.
\end{thm}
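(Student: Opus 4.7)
The plan is to adapt the second proof of Theorem~\ref{KTthm} to the $\B$-calculus setting, using the reproducing formula~\eqref{dBfc} in place of the Wiener-algebra Fourier expansion, together with complex-analytic manipulations in the spirit of \cite[Section~2]{CG08}.

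I would first reduce the operator-norm statement to the pointwise one $\|T^nf(T)x\|\to 0$ for every $x\in X$ via the coordinate-wise operator $T_\infty$ on $\ell^\infty(\N;X)$ used at the end of the second proof of Theorem~\ref{KTthm}; note that $T_\infty$ inherits a bounded $\B$-calculus from $T$, since the latter is given, for general $f\in\B(\D)$, by a limit of Abel sums (Theorem~\ref{BSthm0}(c)) that act coordinate-wise. By Proposition~\ref{prp:ann}, the pointwise claim reduces to showing $\langle f(T)x,x_0^*\rangle=0$ whenever $x\in X$ and $\mathbf{x}^*=(x_n^*)_{n\in\Z}$ is a bounded complete trajectory for $T^*$ through $x_0^*$.

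Fix such $x$ and $\mathbf{x}^*$ and apply~\eqref{dBfc}. A power-series computation together with $(T^*)^k x_0^* = x_k^*$ for $k\ge 0$ yields
\[
\langle T(I-\overline{w}T)^{-2}x,\,x_0^*\rangle = \Psi'(\overline{w}), \qquad w\in\D,
\]
where $\Psi(\zeta) := \sum_{n=1}^\infty \zeta^n\langle x,x_n^*\rangle$ for $|\zeta|<1$. A brief manipulation identifies $\Psi(\zeta)=\langle Tx,\,h_{\mathbf{x}^*}(1/\zeta)\rangle$ for $0<|\zeta|<1$, so the holomorphic extension of $h_{\mathbf{x}^*}$ to $\C\setminus\sigma_u(T)$ established in the proof of Theorem~\ref{KTthm} shows that $\Psi$, and hence $\Psi'$, extends holomorphically to $\C\setminus\{\overline{\l}:\l\in\sigma_u(T)\}$. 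The target statement thus reduces to the scalar identity
\[
f(0)\langle x,x_0^*\rangle + \frac{2}{\pi}\int_\D\log\!\bigl(\tfrac{1}{|w|}\bigr) f'(w)\,\Psi'(\overline{w})\,dA(w) = 0.
\]

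The core of the argument --- and the main obstacle --- is to derive this identity from the hypothesis $f|_{\sigma_u(T)}=0$. The plan, following \cite[Section~2]{CG08}, is to apply a Stokes/Cauchy--Green argument to the form $f(w)\Psi'(\overline{w})\log(1/|w|)\,d\overline{w}$ on $\{|w|<r\}$ and let $r\uparrow 1$, converting the area integral into a boundary integral along $\T$ and then deforming this contour off $\sigma_u(T)$ using the holomorphic extension of $\Psi'(\overline{\cdot})$ across $\T\setminus\sigma_u(T)$. Since $\B(\D)\subseteq A(\D)$, the function $f$ extends continuously to $\overline{\D}$ and vanishes on $\sigma_u(T)$, so, combined with a careful exhaustion and the bound~\eqref{hxest} for $h_{\mathbf{x}^*}$, the residual contributions near $\sigma_u(T)$ should disappear in the limit; the $f(0)\langle x,x_0^*\rangle$ term will be matched, informally, by the value at $\zeta=0$ emerging from this boundary calculation via the scalar reproducing formula~\eqref{rep}. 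The most delicate point will be balancing the vanishing rate of $f$ near $\sigma_u(T)$ against the potential boundary blow-up of $\Psi'(\overline{\cdot})$, for which only the crude pole-type bound inherited from~\eqref{hxest} is available.
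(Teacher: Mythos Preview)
The paper does not actually prove Theorem~\ref{BSthm}; it only cites \cite{BS22} and describes the argument there as proceeding ``via the limit isometry, and some manipulations set out in \cite[Section~2]{CG08}.'' Your plan is in the same family: you replace the limit isometry by its dual incarnation, the bounded complete trajectory of Proposition~\ref{prp:ann}, and you correctly recognise that the holomorphic extension of $h_{\mathbf{x}^*}$ across $\T\setminus\sigma_u(T)$ encodes the same two-sided information. Your reductions (to strong convergence via $T_\infty$, then to $\langle f(T)x,x_0^*\rangle=0$ via Proposition~\ref{prp:ann}, then to a scalar area integral via \eqref{dBfc}) are all sound.

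The gap is precisely the ``delicate point'' you flag, and as stated it is not closed. Your proposed Stokes/contour argument pushes the boundary of integration towards $\T$ and deforms it off $\sigma_u(T)$ using the holomorphic extension of $\Psi'(\overline{\,\cdot\,})$; what remains are contributions from small neighbourhoods of $\sigma_u(T)$. There the only bound on $\Psi'(\overline w)$ comes, via Cauchy estimates, from \eqref{hxest}, and is of order $(1-|w|)^{-2}$, while on the $f$ side you have nothing beyond continuity on $\overline{\D}$ and vanishing on $\sigma_u(T)$---no rate. The $\B$-norm controls $\int_0^1\sup_\theta|f'(re^{i\theta})|\,dr$, not the modulus of continuity of $f^b$ at points of $\sigma_u(T)$, so you cannot trade one off against the other. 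Since $\sigma_u(T)$ need not even have measure zero, a naive ``shrink arcs around $\sigma_u(T)$'' argument cannot work in general.

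What the limit-isometry route supplies, and your plan does not, is a \emph{second} integral representation. The limit isometry $V$ is doubly power-bounded, so both $V$ and $V^{-1}$ satisfy (dGSF); the manipulations in \cite[Section~2]{CG08} combine these two one-sided area integrals so that the singular boundary behaviour cancels, reducing the evaluation of $\langle f(V)y,y^*\rangle$ to a pairing against a bounded object on $\T$ that depends only on $f^b|_{\sigma(V)}$. In your language, the missing ingredient is to pair \eqref{dBfc} with a companion formula built from the \emph{interior} piece of $h_{\mathbf{x}^*}$ (equivalently, the negative half of the trajectory), rather than using the extension of $h_{\mathbf{x}^*}$ merely to push contours. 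Once you write down and subtract that second representation, the $(1-|w|)^{-2}$ singularities cancel and the vanishing of $f^b$ on $\sigma_u(T)$ finishes the argument.
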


\section{The special  case when $f(T) = I-T$} \label{sect3}

In this section, we consider the case when $f(z) = 1-z$.   This was the case which inspired the authors of \cite{KT86} to obtain Theorem 2.1, and it is the case which is most often used.   

\subsection{The basic result}
Since  $\{1\}$ is a set of spectral synthesis for $W(\T)$, Theorem \ref{KTthm} gives the following.

\begin{cor} \label{KTcor}
Let $T$ be a power-bounded operator on a Banach space.   Then $\lim_{n\to\infty} \|T^n(I-T)\| = 0$ if (and only if) $\sigma_u(T)\subseteq \{1\}$.
\end{cor}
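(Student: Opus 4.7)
The plan is to obtain sufficiency directly from Theorem~\ref{KTthm} applied to the particular function $f(z) := 1 - z$, and to handle the (parenthesised) necessity by a short spectral-mapping argument.

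For the sufficient direction, observe that $f(z) = 1 - z$ lies in $W^+(\D)$ with Taylor coefficients $a_0 = 1$, $a_1 = -1$, $a_n = 0$ otherwise, and vanishes on $\{1\}$. By observation~(ii) following Theorem~\ref{KTthm}, every countable closed subset of $\T$ is a set of spectral synthesis for $W(\T)$; in particular this applies to $\{1\}$, so $f$ is of spectral synthesis in $W(\T)$ with respect to $\{1\}$. Since $\sigma_u(T) \subseteq \{1\}$ by hypothesis, $f$ is also of spectral synthesis with respect to $\sigma_u(T)$ (if $\sigma_u(T) = \emptyset$ the condition is vacuous, taking $g_k = f$; otherwise $\sigma_u(T) = \{1\}$ and there is nothing to change). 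Theorem~\ref{KTthm} then delivers $\|T^n(I-T)\| = \|T^n f(T)\| \to 0$.

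For the converse, assume $\|T^n(I-T)\| \to 0$ and let $\lambda \in \sigma_u(T)$. Applying the spectral mapping theorem for polynomials to $p(z) = z^n(1-z)$ gives $\lambda^n(1-\lambda) \in \sigma(T^n(I-T))$, so $|1-\lambda| = |\lambda|^n|1-\lambda| \le \|T^n(I-T)\|$ for every $n \ge 0$. Letting $n \to \infty$ forces $|1-\lambda| = 0$, i.e.\ $\lambda = 1$, whence $\sigma_u(T) \subseteq \{1\}$.

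There is no genuine obstacle: the entire content of the statement is packaged into Theorem~\ref{KTthm} together with the classical fact, recorded in observation~(ii), that singletons (being countable) are sets of spectral synthesis for $W(\T)$. The only point where one must be slightly alert is to note that vanishing on a subset of a synthesis set is still enough; this is why the argument works even when $\sigma_u(T)$ is a proper subset of $\{1\}$.
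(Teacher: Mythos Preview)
Your proof is correct and follows the same route as the paper: the ``if'' direction is obtained exactly as here, by noting that $\{1\}$ is a set of spectral synthesis for $W(\T)$ and applying Theorem~\ref{KTthm} with $f(z)=1-z$. The paper leaves the parenthesised converse implicit (mentioning it only later as an obvious consequence), whereas you supply the short spectral-mapping argument; this is a welcome addition but not a departure in method.
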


Corollary \ref{KTcor} has many applications.   An elementary application is as follows.

Assume that $T$ is power-bounded on a Banach space $X$, and that $T$ is mean-ergodic (this is automatic if $X$ is reflexive).   Then $X$ decomposes as
\[
X = \overline{\operatorname{Ran}(I-T)} \oplus \Ker(I-T),
\]
and the Ces\`aro means of the sequence $(T^n)_{n\in\Z_+}$ converge to $P$ in the strong operator topology, where $P$ is the projection of $X$ onto the second summand.   

Now assume in addition that $\sigma_u(T) = \{1\}$.   Then the restriction $T_1$ of $T$ to the first summand is power-bounded, with $\sigma_u(T_1) \subseteq \{1\}$.  By Corollary \ref{KTcor} applied to $T_1$, and then by approximation, $\lim_{n\to\infty} \|T^nx\| \to 0$ for all $x$ in the first summand.  Since $T$ acts as the identity on the second summand, it follows that $T^n$ converges in the strong operator topology to the projection $P$ as $n\to\infty$.

There are many different proofs of Corollary \ref{KTcor}, including the following:
\begin{enumerate}[(i)]
\item  Esterle \cite{Est83} proved the result in the special case when $\sigma(T) = \{1\}$.   
\item  Katznelson and Tzafriri  gave a direct proof of Corollary \ref{KTcor} (\cite[Theorem 1]{KT86}), using Tauberian theorems and Fourier analysis.   They also proved Theorem \ref{KTthm} (\cite[Theorem 5]{KT86}) and they noted that Corollary \ref{KTcor} follows.   
\item V\~u \cite{Vu92a} gave a very short proof based on the existence of the limit isometry $S$ whose spectrum is $\{1\}$.   Then Gelfand's theorem implies that $S=I$, and the result follows.  Allan and Ransford \cite[Theorem 1.2]{AR89} had previously given a proof of a version of Corollary \ref{KTcor} for Banach algebras, using some similar ideas to V\~u's proof, but invoking a theorem of Arens.
  \item  Allan, O'Farrell and Ransford \cite{AOR87}  gave a proof based on Tauberian theorems and complex analysis.  Another proof along similar lines was given in \cite[Theorem 3.9]{AP92}.  See also \cite[Corollary 4.7.15]{ABHN}.
  \item  Nevanlinna \cite[Theorem 4.2.2]{Nev93} gave a proof along lines similar to the direct proof in \cite{KT86}.
  \end{enumerate}

The conclusion of Corollary \ref{KTcor} implies that $\sigma_u(T) \subseteq \{1\}$ and $\|T^n\| = o(n)$ as $n\to\infty$.  The question was raised whether the converse is true.   The answer to the question is negative.   This was first observed in a remark following  \cite[Theorem 4.2]{AR89} which established that, for every $\alpha>0$, there is an operator $T$ such that $\sigma_u(T) = \{1\}$ and $\|T^n\| = O(n^\alpha)$ but $\|T^n(I-T)\| \ne O(n^\beta)$ as $n\to \infty$ for any $\beta < \alpha$.   

 Subsequently,  it was shown in \cite[Example 4.1]{TomZem04} that counterexamples exist where $T$ is  Ces\`aro bounded and $X$ is a Hilbert space, and in \cite[Example 4.6]{TomZem04} that counterexamples exist where $\sigma(T) = \{1\}$ and $X$ is a Banach space.    L\'eka \cite{Lek10} subsequently showed that the following operator $T$ is a counterexample satisfying all the additional conditions.

\begin{exa}
Let $V$ be the Volterra operator on $L^2(0,1)$ and let $T$ be the operator 
\[
\begin{pmatrix}  I-V  & -V  \\ 0 & I-V \end{pmatrix}
\]
on the Hilbert space $L^2(0,1) \oplus L^2(0,1)$.   Then $T$ is  Ces\`aro bounded, $\sigma(T) = \{1\}$ and $\lim_{n\to\infty} \|T^n(I-T)\| > 0$.
\end{exa}

On the other hand, there exist bounded operators $T$ which are not power-bounded but satisfy $\lim_{n\to\infty} \|T^n(I-T)\| = 0$ and $\sigma(T) = \{1\}$.  The operator $T: = I-V$ on $L^p(0,1)$ where $p \in [1,\infty], \, p\ne2$, has these properties, but
\[
\| T^n\| \asymp n^{|\frac{1}{4} - \frac{1}{2p}|}, \qquad n\to\infty.
\]
This was shown in \cite{MRSAZ05}, following earlier work by various authors, and more examples were given in \cite{Lek13}.  

A variant of the Katznelson-Tzafriri theorem was obtained in \cite[Theorem 3.4]{Aba16} for operators which are not necessarily power-bounded but are Ces\`aro bounded of some positive order. Here the algebras $W^+(\D)$ and $W(\T)$ are replaced by certain fractional versions of these algebras, and the powers of $T$ in the conclusion of Theorem \ref{KTthm} are replaced by certain fractional Ces\`aro means of $T$.

\subsection{Rates of decay: lower bound}  
The authors of \cite{AR89} noted that their Theorem~4.2 establishes that the rate of decay in Corollary \ref{KTcor} can be arbitrarily slow, in general.   In other words, given any non-negative sequence $(a_n)_{n\ge0}\in c_0$ it is possible to find a power-bounded operator $T$ on a Banach space $X$, such that $\|T^n(I-T)\|\to0$ as $n\to\infty$ but $\|T^n(I-T)\|\ge a_n$ for infinitely many $n\ge0$.

On the other hand, decay rates faster than $n^{-1}$ can happen only in uninteresting cases.   
More precisely, it was shown in \cite[Theorem~3.1]{KMSOT04} that if 
\[
\limsup_{n\to\infty} \,n\|T^n(I-T)\| < e^{-1},
\]
then $X$ splits as $X = X_1 \oplus X_2$, where $X_1$ and $X_2$ are closed $T$-invariant subspaces of $X$, and $Tx=x$ for all $x\in X_1$, and the restriction of $T$ to $X_2$ has spectral radius less than $1$.  Thus $T^n$ converges in the operator norm to a projection and $T$ is power-bounded. By \cite[Theorem~3.4]{KMSOT04} this statement is optimal in the rather strong sense that on any infinite-dimensional Banach space $X$ we may define an equivalent norm in such a way that there exists an operator $T$ on $X$ which fails to be power-bounded but satisfies $\limsup_{n\to\infty}n\|T^n(I-T)\|=e^{-1}$.
An earlier result due to Esterle \cite{Est83} had shown that if $\sigma(T)=\{1\}$ but $T\ne I$, then $$\liminf_{n\to\infty}\,n\|T^n(I-T)\|\ge\frac{1}{96}.$$ The constant $1/96$ was improved to $1/12$ by Berkani in his thesis \cite{Ber83}, and then to $e^{-1}$ in \cite[Theorem~2.2]{KMSOT04}, which is the optimal value (even if one restricts to Hilbert space) by \cite[Theorem~2.3(2)]{KMSOT04}.

\subsection{Ritt operators}  \label{sect3.3}
We return to the Ritt operators which were briefly discussed in Example \ref{exPV}(b).

 It is clear that the spectrum of any Ritt operator $T$ must satisfy $\sigma(T)\subseteq\D\cup\{1\}$. The case when $1\not\in\sigma(T)$ is of no interest to us, since then $T^n$ decays exponentially to zero in the operator norm as $n\to\infty$. When $1\in\sigma(T)$, the elementary resolvent estimate 
$$\|(zI-T)^{-1}\|\ge\frac{1}{\mathrm{dist}(z,\sigma(T))},\qquad z\in\rho(T),$$ shows that the Ritt resolvent condition~\eqref{eq:Ritt_res} gives the sharpest possible upper bound that the resolvent can satisfy (up to the choice of $C$). It was first shown by Komatsu \cite{Kom68} that the Ritt resolvent condition implies that $T$ is power-bounded and  $\|T^n(I-T)\|=O(n^{-1})$ as $n\to\infty$; see also \cite{Bak88} or \cite[Theorem~4.5.4]{Nev93}. Their arguments are based on contour integrals. The converse was proved in \cite[Theorem~2.1]{Nev97}; see also \cite{Lyu99, NagZem99}. The Ritt resolvent condition goes back to \cite{Rit53}, where it is shown that the condition implies $\|T^n\|=o(n)$ as $n\to\infty$. Another early appearance of the Ritt resolvent condition~\eqref{eq:Ritt_res} was in  \cite{Tad86}, which may explain why some authors refer to Ritt operators as \emph{Tadmor-Ritt operators}.

It is shown in \cite[Theorem~3.3]{KMSOT04} by means of a suitable Fourier multiplier operator on $L^1(\R)$ that a bounded linear operator $T$ for which $\|T^n(I-T)\|=O(n^{-1})$ as $n\to\infty$ need not be power-bounded; in the example  $\|T^n\|$ grows like $\log(n)$ as $n\to\infty$. The first example of a Ritt operator $T$ satisfying $\sigma(T)=\{1\}$ was given by Lyubich~\cite{Lyu01}, and L\'eka~\cite{Lek13} has constructed further examples. 

Dungey showed in \cite[Theorem~1.3]{Dun11} that a bounded linear operator $T$ is a Ritt operator if and only if $T=I-(I-S)^\alpha$ for some $\alpha\in(0,1)$ and some power-bounded operator $S$, and one obtains a further equivalent condition if one replaces power-boundedness of $S$ by the condition that $S$ be a \emph{Kreiss operator}, that is, its spectral radius is at most $1$ and 
$$\|(zI-S)^{-1}\|\le\frac{C}{|z|-1},\qquad |z|>1,$$
for some constant $C$. It should be noted that $I-S$ is sectorial whenever $S$ is a Kreiss operator, and in particular the fractional powers $(I-S)^\alpha$ for $\alpha>0$ are well defined. Every power-bounded operator is a Kreiss operator, but the converse is far from true; see for instance~\cite[Theorem~6]{Nev01}.

\subsection{Rates of decay: polynomial rates}
Ritt operators exhibit the fastest possible polynomial rate of decay in Corollary~\ref{KTcor}. Beyond this extremal case
 there has been considerable interest in power-bounded operators $T$ for which $\|T^n(I-T)\|=O(n^{-\beta})$ as $n\to\infty$ for some $\beta\in(0,1)$. Examples with `large' spectra for which $\|T^n(I-T)\|\asymp n^{-\beta}$ as $n\to\infty$ with arbitrary $\beta\in(0,1)$ are easy to construct, for instance by considering suitable normal operators, or more generally multiplication operators on classical function spaces. On the other hand, examples with minimal spectrum, that is, $\sigma(T)=\{1\}$, are much harder to construct. By considering carefully chosen functions of the Volterra operator $V$ on $L^2(0,1)$, L\'eka~\cite{Lek14} showed that for every $\beta\in(1/2,1)$ one may find a contraction $T$ on a Hilbert space such that $\sigma(T)=\{1\}$ and $\|T^n(I-T)\|\asymp n^{-\beta}$ as $n\to\infty$.  This answered in the negative a question raised by Zem\'anek in~\cite{Zem07}, namely whether a power-bounded operator $T$ on a Banach space satisfying $\sigma(T)=\{1\}$ and $\|T^n(I-T)\|=O(n^{-\beta})$ as $n\to\infty$ for some $\beta\in(1/2,1)$ must satisfy  $\|T^n(I-T)\|=O(n^{-1})$ as $n\to\infty$ (and hence be a Ritt operator). As noted by L\'eka in his survey~\cite{Lek17}, no examples are known, even on Banach spaces,  of power-bounded operators $T$ for which $\sigma(T)=\{1\}$ and $\|T^n(I-T)\|\asymp n^{-\beta}$ as $n\to\infty$ for some $\beta\in(0,1/2)$. 

 The case $\beta=1/2$ in the preceding discussion is in some sense special, and at any rate has received particular attention. A prominent example of an operator exhibiting this particular rate of decay is $T:=I-V$, where $V$ is the Volterra operator on $L^2(0,1)$. Indeed, $T$ is power-bounded and satisfies $\sigma(T)=\{1\}$ and $\|T^n(I-T)\|\asymp n^{-1/2}$ as $n\to\infty$; see \cite{MRSAZ05, Tse03}. Foguel and Weiss~\cite[Lemma~2.1]{FogWei73}  and later Nevanlinna~\cite[Theorem~4.5.3]{Nev93} showed that an operator $T$ is power-bounded and satisfies $\|T^n(I-T)\|=O(n^{-1/2})$ as $n\to\infty$ whenever $T=\alpha I+(1-\alpha)S$ for some power-bounded operator $S$ and some $\alpha\in(0,1)$. In fact, \cite[Lemma~2.1]{FogWei73} is stated only for the case of contractions, but it extends straightforwardly to power-bounded operators. 
 More recently Dungey proved in \cite[Theorem~1.2]{Dun08} that the above implication is in fact an equivalence, and he gave several further equivalent conditions. 
 
\subsection{Rates of decay: general results}  \label{sect3.4}
 
In view of the fact that for power-bounded operators $T$ the (extremal) decay rate $\|T^n(I-T)\|=O(n^{-1})$ as $n\to\infty$ in Corollary~\ref{KTcor} is characterised in terms of the resolvent condition~\eqref{eq:Ritt_res}, it is natural to ask whether the decay rate in Corollary~\ref{KTcor} can be related to properties of the resolvent of $T$ more generally. The following is a general result of this type, giving a decay rate for $\|T^n(I-T)\|$ as $n\to\infty$ for a power-bounded operator $T$ satisfying $\sigma_u(T)\subseteq\{1\}$ depending on the growth of the resolvent norms $\|(e^{i\theta}I-T)^{-1}\|$ as $\theta\to0$.

\begin{thm}\label{thm:mlog}
Let $T$ be a power-bounded operator on a Banach space, and assume that $\sigma_u(T)\subseteq\{1\}$. Let $m:(0,\pi]\to(0,\infty)$ be a continuous non-increasing  function such that $ \|(e^{i\theta}I-T)^{-1}\|\le m(|\theta|)$ for $0<|\theta|\le\pi$. Then, for any $c\in(0,1)$,
\begin{equation}\label{eq:mlog}
\|T^n(I-T)\|=O\big(m_{\rm log}^{-1}(cn)\big),\qquad n\to\infty,
\end{equation}
where $m_{\rm log}^{-1}$ is the inverse of the function $m_{\rm log}$ defined by
$$m_{\rm log}(s):=m(s)\log\left(1+\frac{m(s)}{s}\right),\qquad 0< s\le\pi.$$
\end{thm}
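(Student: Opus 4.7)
The plan is to adapt the contour-deformation method used by Batty and Duyckaerts for bounded $C_0$-semigroups to the discrete setting. The starting point is the identity $(I-T)(zI-T)^{-1} = I + (1-z)(zI-T)^{-1}$ combined with the Cauchy formula for $T^n$: since $T$ is power-bounded, $\sigma(T) \subseteq \overline{\D}$ and
\[
T^n(I-T) = \frac{1}{2\pi i}\oint_{|z|=r} z^n(1-z)(zI-T)^{-1}\,dz, \qquad r > 1,\,n\ge1,
\]
where the $I$-term vanishes for $n \ge 1$. Since $\sigma_u(T) \subseteq \{1\}$, the integrand is holomorphic across $\T \setminus \{1\}$, so the contour may be deformed.

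For parameters $0 < \delta \le \pi$ and $\eta > 0$ to be chosen in terms of $n$, I would deform $|z|=r$ to $\Gamma_\delta \cup \Sigma_\eta$, where $\Gamma_\delta = \{e^{i\theta} : \delta \le |\theta| \le \pi\}$ lies on the unit circle and $\Sigma_\eta$ is a short detour outside $\T$ at distance $\eta$ near $z=1$, joined to $\Gamma_\delta$ by radial segments. The deformation is justified by a Neumann series argument: since $\|(e^{i\theta}I-T)^{-1}\| \le m(|\theta|)$, the resolvent extends to $\{|z-e^{i\theta}| < 1/(2m(|\theta|))\}$ with norm at most $2m(|\theta|)$, so choosing $\eta \le 1/(2m(\delta))$ keeps $\Sigma_\eta$ in $\rho(T)$.

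The contribution of $\Sigma_\eta$ is controlled directly: with $|z^n| \le e^{n\eta}$, $|1-z| \lesssim \delta+\eta$, $\|(zI-T)^{-1}\| \lesssim m(\delta)$ and arc length $\lesssim \delta+\eta$, its norm is of order $(\delta+\eta)^2 m(\delta) e^{n\eta}$; taking $\eta$ of order $1/n$ keeps $e^{n\eta}$ bounded. On $\Gamma_\delta$, where $|z^n|=1$, the decay must come from integrating by parts in $\theta$, which contributes a factor $1/n$ at the cost of a derivative $\frac{d}{d\theta}[(1-e^{i\theta})(e^{i\theta}I - T)^{-1}]$ whose norm is bounded by $m(|\theta|) + |\theta| m(|\theta|)^2$. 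Iterating $k$ times produces terms involving $m(|\theta|)^{k+1}/n^k$, from which the log factor must be extracted.

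Choosing $\delta = m_{\rm log}^{-1}(cn)$ with $c<1$ then balances both contributions at order $\delta$, yielding $\|T^n(I-T)\| = O(m_{\rm log}^{-1}(cn))$. The main obstacle is producing the sharp logarithmic factor: a single integration by parts yields $1/n$ but no logarithm, while naive iteration diverges owing to the $m^{k+1}$ growth of higher resolvent derivatives. Following the continuous analogue, one instead employs a telescoping family of intermediate deformations, or equivalently a smooth partition of unity on $\T$ combined with a Phragm\'en--Lindel\"of--type argument, so that the effective number of contributions is $\log(1 + m(\delta)/\delta)$. Combined with $\eta \asymp 1/n$ and $\delta = m_{\rm log}^{-1}(cn)$ (for which the constraint $\eta \le 1/(2m(\delta))$ follows since $\delta m(\delta) \lesssim cn$), this yields \eqref{eq:mlog}.
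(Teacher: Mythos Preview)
The paper itself does not prove this theorem; it only records that two proofs exist, one via contour integrals \cite[Theorem~2.1]{Sei16} and one via a quantified Tauberian theorem for sequences \cite[Theorem~2.5]{Sei15b}. Your proposal is aimed at the first route and has the correct architecture: the Cauchy representation of $T^n(I-T)$, deformation of the contour across $\T\setminus\{1\}$, a short outward detour $\Sigma_\eta$ near $z=1$ with $\eta\asymp1/n$, and Neumann-series control of the resolvent just off the circle. The estimate for $\Sigma_\eta$ is essentially right, and this much matches the Batty--Duyckaerts template as adapted to the disc.

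The gap is in your treatment of $\Gamma_\delta$. Keeping this arc on the unit circle, where $|z^n|\equiv1$, and integrating by parts in $\theta$ is not how the contour argument produces the logarithm: one integration by parts buys a factor $1/n$ at the cost of a factor $m(|\theta|)^{2}$ from the resolvent derivative, and higher iterates blow up, as you yourself note. Your closing sentence---``a telescoping family of intermediate deformations, or equivalently a smooth partition of unity on $\T$ combined with a Phragm\'en--Lindel\"of--type argument, so that the effective number of contributions is $\log(1+m(\delta)/\delta)$''---names a hoped-for outcome without supplying an estimate, and it is not what the cited arguments do. In the actual contour proof the arc away from $z=1$ is pushed \emph{inside} the disc rather than left on $\T$. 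The same Neumann bound you already invoked shows that the resolvent persists, with norm at most $2m(|\theta|)$, down to radius $1-1/(2m(|\theta|))$ near $e^{i\theta}$; deforming $\Gamma_\delta$ onto such a curve gives $|z^n|\le\exp\bigl(-cn/m(|\theta|)\bigr)$, and it is the integration of this exponential decay against the resolvent bound (together with the choice $\delta=m_{\rm log}^{-1}(cn)$) that produces the factor $\log(1+m(\delta)/\delta)$ in $m_{\rm log}$. Without this inward deformation---or some equivalent device that exploits decay of $z^n$ for $|z|<1$---the argument as written does not close.
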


Two proofs of Theorem~\ref{thm:mlog} have been given. The first, in \cite[Theorem~2.1]{Sei16}, uses contour integrals while the second, given in \cite[Theorem~2.5]{Sei15b}, uses techniques from Fourier analysis. In the special case when $ \|(e^{i\theta}I-T)^{-1}\|\le C|\theta|^{-\alpha}$ for some constants $C>0$, $\alpha\ge1$ and all $\theta$ such that $0 < |\theta| \le\pi$,~\eqref{eq:mlog} becomes
\begin{equation}\label{eq:poly_log}
\|T^n(I-T)\|=O\bigg(\frac{(\log n)^{1/\alpha}}{n^{1/\alpha}}\bigg),\qquad n\to\infty.
\end{equation}
A less sharp result of similar type had previously been obtained by Nevanlinna in~\cite[Theorem~9]{Nev01}.
In fact, the upper bound in \eqref{eq:poly_log} is sharp in general. Indeed, it was shown in \cite[Theorem~3.6]{Sei16} that for any $\alpha>2$ there exists a complex Banach space $X_\alpha$ and a power-bounded operator $T_\alpha$ on $X_\alpha$ such that $\sigma_u(T_\alpha)=\{1\}$, $\|(e^{i\theta}I-T)^{-1}\|=O(|\theta|^{-\alpha})$ as $\theta\to0$ and
$$\limsup_{n\to\infty}\frac{n^{1/\alpha}}{\log(n)^{1/\alpha}}\|T_\alpha^n(I-T_\alpha)\|>0.$$ 
On the other hand, for $\alpha=1$ the estimate~\eqref{eq:poly_log} cannot be sharp, since in this case it is easy to see that $T$ satisfies the Ritt resolvent condition~\eqref{eq:Ritt_res} and hence $\|T^n(I-T)\|=O(n^{-1})$ as $n\to\infty$. It is not known whether there exist examples with the same properties as the operator $T_\alpha$ above when $\alpha\in(1,2]$.

The preceding remarks motivate the question whether the upper bounds in~\eqref{eq:mlog} and~\eqref{eq:poly_log}  can be improved at least in some cases and, if so, by how much. Given a power-bounded operator $T$ such that $\sigma_u(T)\subseteq\{1\}$, let 
\begin{equation}\label{eq:m_min}
m(s):=\max_{s\le|\theta|\le\pi}\|(e^{i\theta}I-T)^{-1}\|,\qquad s\in(0,\pi].
\end{equation}
This is the smallest continuous non-increasing  function $m:(0,\pi]\to(0,\infty)$ such that $\|(e^{i\theta}I-T)^{-1}\|\le m(|\theta|)$ for $0<|\theta|\le\pi$. Under the assumption that 
\begin{equation}\label{eq:tec_res}
\lim_{\theta\to0}\max\big\{\|\theta(e^{i\theta}I-T)^{-1}\|,\|\theta(e^{-i\theta}I-T)^{-1}\|  \big\}=\infty,
\end{equation}
it was shown in \cite[Corollary~2.6]{Sei16} that 
\begin{equation}\label{eq:KT_lb}
\|T^n(I-T)\|\ge c m^{-1}(Cn)
\end{equation}
for some constants $C$, $c$ and all sufficiently large $n$. As noted in \cite[Remark~2.7]{Sei16}, it is possible to weaken the technical assumption~\eqref{eq:tec_res} but some condition of this kind must be retained, since otherwise the identity operator gives a counterexample. A lower bound closely related to~\eqref{eq:KT_lb} was obtained in~\cite[Proposition~2.1]{NS20}. It follows from~\eqref{eq:KT_lb} that if the upper bound $m$ in Theorem~\ref{thm:mlog} is chosen optimally then the resulting bound~\eqref{eq:mlog} is never suboptimal by more than the logarithmic correction arising from the definition of $m_{\rm log}$. In the case of polynomial resolvent growth, if we assume that $\|(e^{i\theta}I-T)^{-1}\|\asymp|\theta|^{-\alpha}$ as $\theta\to0$ for some $\alpha>1$, then~\eqref{eq:KT_lb} becomes $\|T^n(I-T)\|\ge cn^{-1/\alpha}$ for some constant $c$ and all sufficiently large $n$. In particular, the upper bound in~\eqref{eq:poly_log} is suboptimal by at most a logarithmic factor in this case.

When the underlying space is a Hilbert space, it is often possible to remove the logarithmic factor in~\eqref{eq:poly_log}, and hence, according to~\eqref{eq:KT_lb}, obtain the best possible rate of decay in Corollary~\ref{KTcor}.   This was proved in~\cite[Theorem~3.10]{Sei16} for the case when $m(s) = Cs^{-\alpha}$ for some $\alpha>1$, by Fourier-analytic techniques.    We state this case in Theorem \ref{thm:poly_decay}, and we give a different proof based on a simple trick used in~\cite[Theorem~2.4]{BT10} in the continuous-parameter setting. This argument was sketched in \cite[Remark~3.11]{Sei16} (and spelt out in \cite[pp.~99--100]{Sei14}).  We will state the general result in Theorem \ref{thm:m_inv}.

\begin{thm}\label{thm:poly_decay}
Let $T$ be a power-bounded operator on a Hilbert space $X$, such that $\sigma_u(T)\subseteq\{1\}$. Assume that $\|(e^{i\theta}I-T)^{-1}\|=O(|\theta|^{-\alpha})$ as $\theta\to0$, for some $\alpha\ge1$. Then
\begin{equation}\label{eq:poly_decay}
\|T^n(I-T)\|=O\big(n^{-1/\alpha}\big),\qquad n\to\infty.
\end{equation}
\end{thm}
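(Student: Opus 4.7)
The plan is to exploit the Hilbert-space structure via Plancherel's identity, adapting the continuous-parameter strategy of \cite[Theorem~2.4]{BT10}.

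I would first extend the resolvent estimate off the unit circle. Combining the resolvent identity $(zI-T)^{-1}=(wI-T)^{-1}+(w-z)(zI-T)^{-1}(wI-T)^{-1}$ with $w=e^{i\theta}$ and the power-boundedness bound $\|(zI-T)^{-1}\|\le K/(|z|-1)$ for $|z|>1$ yields a constant $C'>0$ such that $\|(zI-T)^{-1}\|\le C'|z-1|^{-\alpha}$ for $z$ in a one-sided neighbourhood of $\T\setminus\{1\}$. In particular, for $0<r<1$ and $\theta\in[-\pi,\pi]\setminus\{0\}$,
\[
\|(I-re^{i\theta}T)^{-1}\|\le\min\bigl\{K/(1-r),\ C''|\theta|^{-\alpha}\bigr\}.
\]

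Next I would apply Parseval's identity on the Hilbert space $X$ to the vector-valued Fourier series $(I-re^{i\theta}T)^{-1}(I-T)x=\sum_{k\ge 0}r^ke^{ik\theta}T^k(I-T)x$, obtaining
\[
\sum_{k\ge 0}r^{2k}\|T^k(I-T)x\|^2=\frac{1}{2\pi}\int_{-\pi}^\pi\|(I-re^{i\theta}T)^{-1}(I-T)x\|^2\,d\theta
\]
for each $x\in X$ and $r\in(0,1)$. Rewriting the integrand via the elementary identity $(I-re^{i\theta}T)^{-1}(I-T)x=x+(re^{i\theta}-1)T(I-re^{i\theta}T)^{-1}x$ and splitting the integration at $|\theta|=(1-r)^{1/\alpha}$, the inner region is controlled by power-boundedness while the outer region is controlled by the polynomial resolvent bound from the previous step. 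The quasi-monotonicity $\|T^{k+j}(I-T)x\|\le K\|T^k(I-T)x\|$ coming from power-boundedness then transforms the resulting $\ell^2$-bound into a pointwise one through
\[
(n+1)\|T^{2n}(I-T)x\|^2\le K^2r^{-4n}\sum_{k\ge 0}r^{2k}\|T^k(I-T)x\|^2,
\]
and setting $r=1-1/n$ (so that $r^{-4n}$ stays bounded) transfers the integral estimate into the operator-norm bound $\|T^n(I-T)\|=O(n^{-1/\alpha})$.

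The main technical difficulty lies in producing the integral estimate with the sharp exponent. A direct application of the scheme above with $(I-T)$ produces an exponent that matches the target only for $\alpha$ close to $1$ and degrades for larger $\alpha$. The ``simple trick'' of \cite[Theorem~2.4]{BT10} circumvents this by iterating the Plancherel identity with higher powers $(I-T)^m$ for an integer $m>\alpha-\tfrac12$: the binomial expansion
\[
(I-re^{i\theta}T)^{-1}(I-T)^m=\sum_{j=0}^m\binom{m}{j}(re^{i\theta}-1)^j(I-re^{i\theta}T)^{m-j-1}T^j
\]
confines the singular behaviour to a single term whose contribution is uniformly bounded in $r$ under the above choice of $m$, giving $\|T^n(I-T)^m\|=O(n^{-1/2})$. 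One then recovers a bound on $\|T^n(I-T)\|$ via the moment inequality for the sectorial operator $I-T$ (whose spectrum lies in the closed disc of radius~$1$ centred at~$1$). The careful exponent-matching in this reduction so as to arrive at the sharp rate $n^{-1/\alpha}$, rather than the suboptimal $n^{-1/(2m)}$ given by the crudest moment inequality, is the delicate heart of the argument.
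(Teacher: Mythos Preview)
Your proposal has a genuine gap: the scheme you describe cannot reach the sharp exponent, and your closing sentence is effectively an admission that the core mechanism is missing. The direct Parseval estimate with $(I-T)^m$ for an integer $m>\alpha-\tfrac12$ does yield $\sum_{k\ge0}\|T^k(I-T)^m x\|^2\le C\|x\|^2$ and hence $\|T^n(I-T)^m\|=O(n^{-1/2})$, but the moment inequality then gives only $\|T^n(I-T)\|=O(n^{-1/(2m)})$. Since $m>\alpha-\tfrac12$ together with $\alpha\ge1$ forces $2m>2\alpha-1\ge\alpha$, this is strictly worse than $n^{-1/\alpha}$ for every admissible $m$. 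There is no ``careful exponent-matching'' that rescues this: the factor-of-two loss is intrinsic to passing from an $\ell^2$-bound to a pointwise bound via quasi-monotonicity, and the moment inequality is already sharp, so no refinement of it will close the gap.

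You have also misidentified the ``simple trick'' of \cite[Theorem~2.4]{BT10}. It is not an iteration of Parseval with higher powers of $I-T$; it is a $2\times2$ block-matrix construction that converts the target $O(n^{-1})$ estimate into a question of \emph{power-boundedness}, to which the Hilbert-space Parseval machinery applies without any loss. Concretely, the paper first uses the moment inequality to reduce \eqref{eq:poly_decay} to $\|T^n(I-T)^\alpha\|=O(n^{-1})$, then defines $Q$ on $X\times X$ by
\[
Q=\begin{pmatrix} T & T(I-T)^\alpha \\ 0 & T\end{pmatrix},\qquad Q^n=\begin{pmatrix} T^n & nT^n(I-T)^\alpha \\ 0 & T^n\end{pmatrix},
\]
so that the required bound is equivalent to power-boundedness of $Q$. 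This is then verified via the Gomilko--Shi--Feng resolvent-integrability characterisation of power-bounded Hilbert-space operators, using the key fact from \cite[Lemma~3.9]{Sei16} that $\sup_{|\lambda|>1}\|(\lambda I-T)^{-1}(I-T)^\alpha\|<\infty$ under the polynomial resolvent hypothesis. The matrix step is precisely the device that avoids the factor-of-two loss inherent in your direct approach.
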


\begin{proof}
Since $T$ is power-bounded the operator $I-T$ is sectorial, so we may consider its fractional powers. A straightforward application of the moment inequality shows that~\eqref{eq:poly_decay} is equivalent to $\|T^n(I-T)^\alpha\|=O(n^{-1})$ as $n\to\infty$. Consider the Hilbert  space $Z=X\times X$ and define $Q\in L(Z)$ by $Q(x,y)=(Tx+T(I-T)^\alpha y,Ty)$ for $(x,y)\in Z$. Note that the powers of $Q$ are represented by the operator matrices
$$Q^n=\begin{pmatrix}
T^n &nT^n(I-T)^\alpha\\ 0&T^n
\end{pmatrix},\qquad n\ge0.
$$
In particular, $\|T^n(I-T)^\alpha\|=O(n^{-1})$ as $n\to\infty$ if and only if $Q$ is power-bounded. We now prove that $Q$ is power-bounded.

 It was shown in \cite[Corollary~2.5]{CG08} (see also \cite[Proposition~6.4.4]{Sei14}) that a bounded linear operator $S$ on a Hilbert space $Y$ is power-bounded if and only if $\sigma(S) \subseteq \overline{\D}$  and
\begin{equation}\label{eq:GSF}
\sup_{r>1}\,\,(r-1)\int_{-\pi}^\pi\big(\|(re^{i\theta}I-S)^{-1}y\|^2+\|(re^{i\theta}I-S^*)^{-1}y\|^2\big)\,\dd \theta<\infty
\end{equation}
for all $y\in Y$, where $S^*$ denotes the (Hilbert space) adjoint of $S$. Returning to the operators $T$ on $X$ and $Q$ on $Z$, we have $\sigma(Q)=\sigma(T) \subseteq \overline{\D}$.  Moreover a straightforward calculation gives
$$(\l I-Q)^{-1}=\begin{pmatrix}
(\l I-T)^{-1} &(\l I-T)^{-2} (I-T)^\alpha T\\ 0&(\l I-T)^{-1} 
\end{pmatrix},\qquad \lambda\in\rho(Q).$$
Hence if $z=(x,y)\in Z$ then 
$$\begin{aligned}
\|(\l I-Q)^{-1} z\|&\le\|(\l I-T)^{-1} x\|+\|(\l I-T)^{-1} y\|\\&\qquad +\|(\l I-T)^{-1} (I-T)^\alpha\| \|T\|\|(\l I-T)^{-1} y\|
\end{aligned}$$
for $|\lambda|>1$. 
By \cite[Lemma~3.9]{Sei16}, our assumption that $\|(e^{i\theta}I-T)^{-1} \|=O(|\theta|^{-\alpha})$ as $\theta\to0$ implies that $\sup_{|\lambda|>1}\|(\l I-T)^{-1} (I-T)^\alpha\|<\infty$. Thus there exists a constant $C$ such that 
$$\|(\l I-Q)^{-1} z\|^2\le C\big(\|(\l I-T)^{-1} x\|^2+\|(\l I-T)^{-1} y\|^2\big),\qquad|\lambda|>1,$$
and the same bound holds when $Q$ and $T$ are replaced by their adjoints.
Since $T$ is power-bounded, it follows from~\eqref{eq:GSF} with  $S=T$
that \eqref{eq:GSF} holds also for $Q$. Hence $Q$ is power-bounded, as required.
\end{proof}

Theorem~\ref{thm:poly_decay} shows that if $X$ is a Hilbert space and  $m(s)=Cs^{-\alpha}$, $0<s\le\pi$, for some constants $C,\alpha\ge1$, then
 ~\eqref{eq:mlog} in Theorem~\ref{thm:mlog} can be strengthened to 
$$\|T^n(I-T)\|=O\big(m^{-1}(n)\big),\qquad n\to\infty,$$
By~\eqref{eq:KT_lb} this is in general the best estimate (up to the choice of constants) that can hold. This optimal decay result for power-bounded operators on Hilbert spaces was extended in~\cite{NS20} from the polynomial case to the much wider class of functions $m$ having \emph{reciprocally positive increase}. A continuous non-increasing function $m:(0,\pi]\to(0,\infty)$ is said to have reciprocally positive increase if there exist constants $c\in(0,1]$, $\alpha>0$ and $s_0\in(0,\pi]$ such that
$$\frac{m(t^{-1}s)}{m(s)}\ge ct^\alpha,\qquad t\ge1,\ 0<s\le s_0.$$
The condition is equivalent to the function $s\mapsto m(s^{-1})$ having \emph{positive increase}, which is a classical notion, and it may be understood as requiring the growth of $m(s)$ to be sufficiently regular as $s\to0+$. The class of functions of reciprocally positive increase is vast.   In particular, any continuous non-increasing function $m:(0,\pi]\to(0,\infty)$ which is of the form $m(s)=Cs^{-\alpha}|\log s|^\beta$ for all sufficiently small values of $s$, where $\alpha>0$ and $\beta\in\R$ are constants, has reciprocally positive increase. The following result was proved in~\cite[Theorem~2.3]{NS20}, using techniques from Fourier analysis.
 
\begin{thm}\label{thm:m_inv}
Let $T$ be a power-bounded operator on a Hilbert space, such that $\sigma_u(T)\subseteq\{1\}$. Assume that $\|(e^{i\theta}I-T)^{-1}\|\le m(|\theta|)$ for $0<|\theta|\le\pi$, where $m:(0,\pi]\to(0,\infty)$ is a function of reciprocally positive increase. Then
\begin{equation}\label{eq:m_inv}
\|T^n(I-T)\|=O\big(m^{-1}(n)\big),\qquad n\to\infty,
\end{equation}
\end{thm}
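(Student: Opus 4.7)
The plan is to build on the Fourier-analytic approach that underlies the proof of Theorem~\ref{thm:poly_decay}, combined with a scale-adapted dyadic decomposition of the boundary resolvent, with the reciprocally positive increase of $m$ playing the role of a doubling-type condition that enables optimal balancing on a Hilbert space. The starting point, as in the proof of Theorem~\ref{thm:mlog}, is the integral representation
$$T^n(I-T)x = \lim_{r\to 1-} \frac{1}{2\pi}\int_{-\pi}^{\pi} r^{n+1} e^{i(n+1)\theta}(re^{i\theta}I-T)^{-1}(I-T)x\,d\theta,$$
together with its adjoint counterpart. On a Hilbert space, Parseval's identity converts the operator-norm bound on $T^n(I-T)$ into a square-function bound on the boundary resolvent applied to $(I-T)x$, reducing the problem to the analysis of an oscillatory $L^2$-integral concentrated near $\theta=0$.

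The next step is to introduce a threshold scale $\delta_n>0$ and split the integral into a \emph{singular} piece supported on $|\theta|\le\delta_n$ and a \emph{regular} piece supported on $\delta_n<|\theta|\le\pi$. On the singular piece, the resolvent is controlled pointwise by $m(|\theta|)$; on the regular piece, the oscillating factor $e^{in\theta}$ and Hilbert-space orthogonality are exploited, either by an integration-by-parts argument picking up a factor of $n^{-1}$, or through a finer Littlewood--Paley decomposition into dyadic annuli $\{2^{-k-1}<|\theta|\le 2^{-k}\}$ whose contributions, thanks to the reciprocally positive increase of $m$, form a geometric series rather than a logarithmically divergent one. Choosing $\delta_n\asymp m^{-1}(n)$ balances the two contributions and delivers the rate $O(m^{-1}(n))$.

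The main obstacle is precisely this balancing together with the summation of the dyadic pieces without any logarithmic loss: this is the point at which the Hilbert-space structure and the reciprocally positive increase of $m$ both become essential. On a general Banach space the pieces accumulate logarithmically, producing the extra factor $\log(1+m(s)/s)$ seen in Theorem~\ref{thm:mlog}; the condition $m(t^{-1}s)/m(s)\ge c t^\alpha$ forces geometric summability across dyadic scales and, in addition, ensures that $m$ is sufficiently regular for $m^{-1}$ to be strictly monotone near the origin, so that $\delta_n:=m^{-1}(n)$ is a meaningful and unambiguous choice. Verifying rigorously that reciprocal positive increase is exactly the input needed to make the singular and regular contributions match to the same order $m^{-1}(n)$, and in particular that no harmonic-sum residue survives the Hilbert-space averaging, is the technical heart of the argument and is carried out in detail in \cite{NS20}.
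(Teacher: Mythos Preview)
Your proposal is aligned with the paper's treatment: the paper does not give a self-contained proof of this result but simply states that it was proved in \cite[Theorem~2.3]{NS20} using techniques from Fourier analysis. Your outline---integral representation, Parseval on the Hilbert space, a scale-adapted splitting at $\delta_n\asymp m^{-1}(n)$, and a dyadic decomposition rendered geometrically summable by the reciprocally positive increase condition---is an accurate sketch of that approach, and you defer to the same reference for the technical core.
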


Here $m^{-1}$ is the  right-inverse of $m$ defined by
$$m^{-1}(t)=\inf\{s\in(0,\pi]:m(s)\le t\},\qquad t\ge m(\pi).$$ Note that, in contrast to~\eqref{eq:mlog}, there is no constant $c$ appearing in~\eqref{eq:m_inv}. This is because functions of reciprocally positive increase are characterised by the property that $m^{-1}(cn)\asymp m^{-1}(n)$ as $n\to\infty$ for all $c>0$; see~\cite[Proposition~2.2]{RSS}. It is clear from the preceding discussion that~\eqref{eq:m_inv} gives the best possible rate one might hope for in Corollary~\ref{KTcor}. Theorem~\ref{thm:m_inv} is optimal in a further sense, namely that the class of functions having reciprocally positive increase is the largest class for which the conclusion of the theorem can hold. Indeed, it was shown in~\cite[Proposition~4.1]{NS20} that if $T$ is a power-bounded normal operator on a Hilbert space such that $\sigma_u(T)\subseteq\{1\}$ and $\|T^n(I-T)\|=O(m^{-1}(cn))$ as $n\to\infty$ for some $c>0$ and $m$ defined as in~\eqref{eq:m_min}, then $m$ must have reciprocally positive increase. See~\cite[Example~3.1]{NS20} for an example of a (non-normal) operator $T$ on a Hilbert space for which the conditions of Theorem~\ref{thm:m_inv} are satisfied for a function $m$ that has reciprocally positive increase but is not of polynomial form. In this example,~\eqref{eq:m_inv} improves on~\eqref{eq:mlog} by a factor of $\log n$.

\section{Applications}  \label{sect4}

The Katznelson-Tzafriri theorem and its variants and refinements discussed in Sections~\ref{sect2} and~\ref{sect3} have been applied in various areas of mathematics. In this section we briefly discuss some of these applications. 
We make no attempt at being exhaustive in our coverage, and our selection is  guided by our own interests.

\subsection{The countable spectrum theorem}\label{sec:ABLV}

 The ``countable spectrum theorem'' is a well-known result in the asymptotic theory of operator semigroups. The version of the theorem  most relevant to us may be stated as follows.
 
 \begin{thm}\label{thm:ABLV}
Let $T$ be a power-bounded operator on a Banach space $X$, and assume that $\sigma_u(T)$ is countable and contains no eigenvalues of the dual $T^*$ of $T$. Then $\lim_{n\to\infty}\|T^nx\|=0$ for all $x\in X$.
 \end{thm}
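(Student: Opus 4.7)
My plan is to derive the theorem from Proposition~\ref{prp:ann}, which identifies $X_0$ with $M_\perp$: this reduces the claim to showing $M=\{0\}$, so it suffices to prove that the only $x^*\in X^*$ lying on a bounded complete trajectory $\mathbf{x}^*=(x_n^*)_{n\in\Z}$ for $T^*$ is $x^*=0$. Given such a trajectory, form the associated discrete Carleman transform $h_{\mathbf{x}^*}$ defined just before Proposition~\ref{prp:ann}. The relation $T^*x_n^*=x_{n+1}^*$ immediately yields the functional equation $(zI-T^*)h_{\mathbf{x}^*}(z)=x_0^*$ for all $|z|\ne1$, and the argument in our second proof of Theorem~\ref{KTthm} shows that $h_{\mathbf{x}^*}$ in fact extends holomorphically across every point of $\T\setminus\sigma_u(T)$, so it defines a holomorphic map from $\C\setminus E$ to $X^*$ for some closed set $E\subseteq\sigma_u(T)$. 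Once $E=\emptyset$ is established, $h_{\mathbf{x}^*}$ is entire; combined with the decay $\|h_{\mathbf{x}^*}(z)\|\le\|\mathbf{x}^*\|_\infty/(|z|-1)$ for $|z|>1$ (immediate from the series definition of $h_{\mathbf{x}^*}$), Liouville's theorem forces $h_{\mathbf{x}^*}\equiv0$, whence the uniqueness of the series expansions for $|z|>1$ and $|z|<1$ yields $x_n^*=0$ for every $n\in\Z$ and in particular $x_0^*=0$.

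To show $E=\emptyset$, suppose for contradiction $E\ne\emptyset$. Since $E$ is a closed, countable, non-empty subset of $\T$, the Cantor--Bendixson theorem furnishes an isolated point $\lambda_0$ of $E$, and so $h_{\mathbf{x}^*}$ has an isolated singularity at $\lambda_0$. Expanding $h_{\mathbf{x}^*}(z)=\sum_{n\in\Z}c_n(z-\lambda_0)^n$ as a Laurent series on a small punctured disc around $\lambda_0$ and substituting into the functional equation, with $zI-T^*$ rewritten as $(z-\lambda_0)I+(\lambda_0 I-T^*)$, one obtains by comparing principal parts the recursion $(\lambda_0 I-T^*)c_{-k}=-c_{-k-1}$ for all $k\ge1$. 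If $\lambda_0$ is a pole of finite order $N\ge1$ then $c_{-N}\ne0$ while $c_{-N-1}=0$, and the recursion at $k=N$ gives $(\lambda_0 I-T^*)c_{-N}=0$; that is, $c_{-N}$ is a non-zero eigenvector of $T^*$ for the eigenvalue $\lambda_0\in\sigma_u(T)$, contradicting the hypothesis. Hence $\lambda_0$ must in fact be a removable singularity, contradicting $\lambda_0\in E$.

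The principal technical obstacle I anticipate is the exclusion of an essential singularity at $\lambda_0$, without which the Laurent analysis of the previous paragraph cannot be applied. The bound~\eqref{hxest} is only valid for $z$ off $\T$; although it yields $\|h_{\mathbf{x}^*}(z)\|=O(|z-\lambda_0|^{-1})$ along the inward and outward normals to $\T$ at $\lambda_0$, it degrades along tangential approaches and is vacuous on $\T$ itself. To obtain a uniform estimate of the right order throughout a punctured disc about $\lambda_0$ I would exploit the holomorphy of $h_{\mathbf{x}^*}$ across $\T\setminus\{\lambda_0\}$ in a small neighbourhood of $\lambda_0$ (inherited from $\lambda_0$ being isolated in $E$) together with the functional equation, for instance by applying a Phragm\'en--Lindel\"of argument separately in the two half-discs $\{z:|z-\lambda_0|<\delta,\,|z|>1\}$ and $\{z:|z-\lambda_0|<\delta,\,|z|<1\}$, using~\eqref{hxest} along the circular arcs and local boundedness on the semicircular boundary. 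A uniform estimate $\|h_{\mathbf{x}^*}(z)\|\le C/|z-\lambda_0|$ on such a punctured disc would render $(z-\lambda_0)h_{\mathbf{x}^*}(z)$ bounded and hence holomorphic at $\lambda_0$ by Riemann's removable singularity theorem, thereby ruling out essential singularities and reducing matters to the pole case already handled.
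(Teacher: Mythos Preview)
Your strategy is sound and genuinely different from the paper's: the paper sketches the Esterle--Strouse--Zouakia argument, deducing Theorem~\ref{thm:ABLV} from Theorem~\ref{KTthm} by showing that $\{f(T)x:f\in W^+(\D),\ f|_{\sigma_u(T)}=0,\ x\in X\}$ spans a dense subspace of $X$ (via almost-periodicity and a Mittag-Leffler argument). Your route through Proposition~\ref{prp:ann} is instead a variant of the Lyubich--V\~u proof, and the reduction to ``no nonzero bounded complete trajectory for $T^*$'' is correct.

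The gap you flag, however, is real and your Phragm\'en--Lindel\"of patch does not close it. The estimate~\eqref{hxest} says nothing about $h_{\mathbf{x}^*}$ \emph{on} $\T$, and although $h_{\mathbf{x}^*}$ extends holomorphically across $\T\setminus\{\lambda_0\}$ near $\lambda_0$, you have no uniform bound for it as $z\to\lambda_0$ along $\T$; thus the boundary of each half-disc contains arcs accumulating at $\lambda_0$ on which you have no control, and there is nothing for Phragm\'en--Lindel\"of to bite on. (Note too that a bound $C/|z-\lambda_0|$ would force a \emph{simple} pole, which is stronger than you need.) The clean fix is to abandon the pointwise analysis of $h_{\mathbf{x}^*}$ and work directly with the shift $\Sigma\colon(x_n^*)\mapsto(x_{n+1}^*)$ on the closed subspace $\mathcal T\subseteq\ell^\infty(\Z;X^*)$ of bounded complete trajectories. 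This $\Sigma$ is an invertible isometry, and the formula $x_n^*=(\lambda I-T^*)^{-1}y_n^*$ shows that $\sigma(\Sigma)\subseteq\sigma_u(T)$. If $\mathcal T\ne\{0\}$ then $\sigma(\Sigma)$ is nonempty, closed and countable, so it has an isolated point $\lambda_0$; the Riesz projection yields a nonzero $\Sigma$-invariant subspace on which $\Sigma$ is an invertible isometry with spectrum $\{\lambda_0\}$, hence $\Sigma=\lambda_0 I$ there by Gelfand's theorem. The zeroth coordinate of any nonzero element of this subspace is then an eigenvector of $T^*$ for $\lambda_0\in\sigma_u(T)$, contradicting the hypothesis. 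This is precisely the isolated-point-plus-Gelfand idea you were reaching for, but executed where the spectral calculus is available rather than through a singularity analysis that the available bounds do not support.
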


There exists an analogue of Theorem~\ref{thm:ABLV} for bounded $C_0$-semigroups of operators, which was proved independently by Arendt and Batty~\cite{AB88} and by Lyubich and V\~u~\cite{LyuVu88}, and the discrete version of the countable spectrum theorem stated above was proved in \cite[Theorem~5.1]{AB88}.   Both the discrete and the continuous version of the countable spectrum theorem are also known as the ``ABLV theorem''. 

The methods of proof given in \cite{AB88} and~\cite{LyuVu88} are very different. The former uses transfinite induction and a general Tauberian theorem, whereas the latter uses a shorter, purely functional analytic argument. Soon after the appearance of these two papers, Esterle, Strouse and Zouakia~\cite[Section~3]{ESZ1} gave a third proof of Theorem~\ref{thm:ABLV} based on the Katznelson-Tzafriri theorem. The authors considered the ideal
$$I:=\big\{f\in W^+(\D): f\mbox{ vanishes on } \sigma_u(T)\big\}$$
of $W^+(\D)$, and they were able to prove that under the assumptions of Theorem~\ref{thm:ABLV} the set
$$Y:=\big\{f(T)x:f\in I,\ x\in X\big\}$$
spans a dense subspace of $X$. Their argument relied on the theory of almost periodic sequences and a powerful result of Mittag-Leffler type. Since closed countable subsets of $\T$ are of spectral synthesis for $W(\T)$, any element of $I$ is of spectral synthesis in $W(\T)$ with respect to $\sigma_u(T)$. It follows from the Katznelson-Tzafriri theorem (Theorem~\ref{KTthm}) that $\|T^ny\|\to0$ for all $y\in Y$. Now Theorem~\ref{thm:ABLV} follows by density of the linear span of $Y$ in $X$ and power-boundedness of $T$.

\subsection{The method of alternating projections}

Let $Y_1,\dots,Y_N$ be a collection of $N\ge2$ closed subspaces of a Hilbert space $X$. Let $P_k$ be the orthogonal projection onto $Y_k$ for $k=1,\dots, N$, and let $P$ be the orthogonal projection onto $Y=Y_1\cap\dotsc\cap Y_N$. Given $x\in X$ we may inductively define a sequence $(x_n)_{n\ge0}$ in $X$ by setting $x_0=x$ and $x_n=P_nx_{n-1}$ for $n\ge1$, where the subscript $n$ of $P_n$ cycles through $\{1,\dots,N\}$. This process is known as the ``method of alternating projections''. It was shown by Halperin~\cite{Hal62} that $\lim_{n\to\infty}\|x_n-Px\|=0$. Halperin's theorem has applications in many areas of mathematics, including image reconstruction, computed tomography, and domain decomposition methods in the solution of elliptic partial differential equations on composite domains; see also \cite[Section~7]{Deu92}.

Halperin's theorem may be restated as saying that $\lim_{n\to\infty}\|T^nx-Px\|=0$ for all $x\in X$, where $T\in L(X)$ is the contraction defined by $T:=P_N\dots P_1$. It is easy to see that $\Ker(I-T)=Y$, and it follows from the mean ergodic theorem that 
$$X=\overline{\operatorname{Ran}(I-T)}\oplus Y.$$
Now Halperin's theorem is a simple consequence of the Katznelson-Tzafriri theorem in the form of Corollary~\ref{KTcor}. Motivated by its many applications, there has been considerable interest in the \emph{rate} of convergence in Halperin's theorem. Badea, Grivaux and M\"uller~\cite{BGM11} proved that the following dichotomy holds: Either the rate of convergence is exponentially fast in the sense that
\begin{equation}\label{eq:MAP_exp}
\|T^n-P\|=O(r^n),\qquad n\to\infty,
\end{equation}
 for some $r\in(0,1)$, or the convergence is arbitrarily slow in the sense that for every $\ep>0$ and every non-negative sequence $(a_n)_{n\ge0}$ in $c_0$ there exists $x\in X$ such that $\|x\|<\sup_{n\ge0}|a_n|+\ep$ and $\|T^nx-Px\|\ge a_n$ for all $n\ge0$. Moreover, we have uniform exponential convergence if and only if any of the following equivalent conditions are satisfied:
\begin{enumerate}[(i)]
\item $\operatorname{Ran}(I-T)$ is closed in $X$;
\item $Y_1^\perp+\dots +Y_N^\perp$ is closed in $X$;
\item The Friedrichs number $c(Y_1,\dots,Y_N)$ is strictly smaller than 1.
\end{enumerate}
Here the {\em Friedrichs number} $c(Y_1,\dots,Y_N)$ is a real number between 0 and 1 which describes the geometric relationship between the subspaces $Y_1,\dots, Y_N$. For instance, when $X=\R^2$ and $Y_1$, $Y_2$ are lines through the origin, then $c(Y_1,Y_2)$ coincides with the cosine of the (smaller) angle made by the lines. When $c(Y_1,\dots,Y_N)<1$ it is possible to obtain a bound for the number $r$ in \eqref{eq:MAP_exp} in terms of the Friedrichs number $c(Y_1,\dots,Y_N)$, and also in terms of other geometric quantities relating to the subspaces $Y_1,\dots, Y_N$; see for instance~\cite[Section~4B]{BGM11}.

Some of the results in~\cite{BGM11} have subsequently been refined. It was noted in~\cite[Proposition~4.1]{BadSei16} that the numerical range $W(T):=\{\langle Tx,x\rangle:x\in X,\|x\|=1\}$ of $T$ is contained in a Stolz domain, that is to say the convex hull of $r\overline{\D}\cup\{1\}$ for some $r\in(0,1)$.  Now the elementary estimate
$$\|(\lambda I-T)^{-1}\|\le \frac{1}{\mathrm{dist}(\lambda,W(T))},\qquad \lambda\in\C\setminus W(T),$$
shows that $T$ is a Ritt operator. (The idea of using the numerical range to obtain resolvent estimates for operators on Hilbert spaces was used in a similar way in \cite{CohLin16}.) In fact, as a consequence of \cite[Theorem~2.2]{BadSei16}, $T$ is even an unconditional Ritt operator, that is, there is a constant $C$ such that, for any $n\in\N$ and  $a_0,\dots,a_n\in\C$,
$$\bigg\|\sum_{k=0}^na_k T^k(I-T)\bigg\|\le C\max_{0\le k\le n}|a_k|.$$
Using the fact that $\|T^n(I-T)\|=O(n^{-1})$ as $n\to\infty$, it was shown in~\cite[Theorem~4.3]{BadSei16} (by means of a Mittag-Leffler argument similar to the one mentioned in Section~\ref{sec:ABLV}) that even if the subspaces $Y_1,\dots, Y_N$ satisfy $c(Y_1,\dots, Y_N)=1$, so that the rate of convergence in Halperin's theorem is arbitrarily slow, then there exists a dense subspace $Z$ of $X$ such that, for every $x\in Z$, the decay is rapid in the sense that $\|T^nx-Px\|=O(n^{-\beta})$ as $n\to\infty$ for all $\beta >0$. We refer the reader to~\cite{BadSei17} for related operator-theoretic results, and to \cite{BorKop20} for an approach  to rates of convergence in the method of alternating projections based on greedy approximation. The latter paper in particular includes a resolution of the well-known Deutsch-Hundal conjecture.

\subsection{Non-autonomous evolution equations with periodic damping} 

The Katznelson-Tzafriri theorem and its quantified variants  have, perhaps surprisingly,  also found applications in the study of continuous-time dynamical systems. In this subsection and the next we briefly discuss two examples of this kind. Consider the non-autonomous evolution equation
\begin{equation}\label{eq:ACP}
\left\{\begin{aligned}
\dot{z}(t)&=A(t)z(t),\qquad t\ge0,\\
z(0)&=x\in X,
\end{aligned}\right.
\end{equation}
where $X$ is a Hilbert space and $(A(t))_{t\ge0}$ is a family of closed operators on $X$ of the form
$$A(t)=A_0+B(t)B(t)^*,\qquad t\ge0.$$
Here $A_0$ is assumed to be the generator of a unitary group $(T_0(t))_{t\in\R}$ on $X$, and $B\in L^2_{\rm loc}(\R_+;L(V,X))$ for another Hilbert space $V$. If $(U(t,s))_{t\ge s\ge0}$ is the evolution family associated with $(A(t))_{t\ge0}$, then the solution of~\eqref{eq:ACP}, in a natural ``mild'' sense, is given by $z(t)=U(t,0)x$ for $t\ge0$; see~\cite[Section~3]{PauSei19}. 

If the family $(A(t))_{t\ge0}$ is periodic, with period $\tau$ say, then one may study the long-time asymptotic behaviour of solutions to~\eqref{eq:ACP} by considering the orbits of the monodromy operator $T:=U(\tau,0)$. Under the additional assumption that  $T_0(\tau)=I$, a requirement referred to in~\cite{PauSei19} as a ``resonance condition'', it was proved in~\cite[Proposition~3.4]{PauSei19} that $T$ is a Ritt operator. Using this observation (and the same Mittag-Leffler argument as in the previous subsections) it was shown in~\cite[Theorem~3.7]{PauSei19} that, under the resonance condition, all solutions of \eqref{eq:ACP} converge as $t\to\infty$ to a periodic solution  of \eqref{eq:ACP}, and that for a dense subspace of initial values $x\in X$ the rate of convergence occurs ``superpolynomially fast'' in the sense that the difference decays faster than $t^{-\beta}$ as $t\to\infty$ for all $\beta>0$. The same theorem also shows that the convergence is in fact exponentially fast, uniformly in the initial data, provided a certain observability condition is satisfied. These abstract results are then applied in~\cite[Sections~4 and~5]{PauSei19} to study the classical transport and wave equations in one spatial dimension, subject to appropriate periodic damping.

\subsection{Infinite systems or coupled ordinary differential equations}

Another example where a good understanding of discrete semigroups can shed light on the behaviour of continuous-time systems is in the study of certain (infinite) systems of coupled ordinary differential equations. Let 
 $m\in\N$ and let $A_0$, $A_1$ be $m\times m$ matrices satisfying the relation
\begin{equation}\label{eq:char}
A_1(\lambda I-A_0)^{-1}A_1=\phi(\lambda)A_1,\qquad \lambda\in\rho(A_0),
\end{equation}
for some rational function $\phi$. Note that in this case $|\phi(\lambda)|\to0$ as $|\lambda|\to\infty$, and the poles of $\phi$ form a (possibly strict) subset of $\sigma(A_0)$. We consider the system of coupled ordinary differential equations given by
\begin{equation}\label{eq:sys}
\dot{x}_k(t)=A_0x_k(t)+A_1x_{k-1}(t),\qquad t\ge0,\ k\in\Z.
\end{equation}
We may think of this system as describing the behaviour of infinitely many agents, indexed by $k\in\Z$, whose states $x_k(t)\in \C^m$ evolve in time as a function of their own current state as well as that of the preceding agent. Such models arise for instance in the study of (infinitely) long chains of vehicles, known sometimes in the control theory literature as ``platoon models''. 

In general, it is difficult to say much at all about the asymptotic behaviour of this simple-looking coupled system. However, condition~\eqref{eq:char} makes it possible to say a considerable amount (and the assumption is satisfied in the main cases of interest to control theorists). When~\eqref{eq:char} holds we call $\phi$ the {\it characteristic function} of our system. The class of coupled systems possessing a characteristic function was first studied in \cite{PauSei17}, with further developments in \cite{PauSei18, PauSei20}. We let $1\le p\le\infty$ and rewrite the system~\eqref{eq:sys} in the form of an abstract Cauchy problem,
\begin{equation}\label{eq:ACP2}
\left\{\begin{aligned}
\dot{z}(t)&=Az(t),\qquad t\ge0,\\
z(0)&=x\in \ell^p(\Z;\C^{m}),
\end{aligned}\right.
\end{equation}
where $z(t)=(x_k(t))_{k\in\Z}$ and $Ay=(A_0y_k+A_1y_{k-1})_{k\in\Z}$ for $y=(y_k)_{k\in\Z}\in\ell^p(\Z;\C^m)$. It turns out that the asymptotic behaviour of the solutions of~\eqref{eq:sys} can be described precisely in terms of the properties of the characteristic function. As an important preliminary result, it was shown in \cite[Theorem~2.3]{PauSei17} that the spectrum of $A$ satisfies $\sigma(A)\setminus\sigma(A_0)=\Omega_\phi$, where
$$\Omega_\phi=\big\{\lambda\in\rho(A_0):|\phi(\lambda)|=1\big\}.$$
This result is the starting point for an in-depth analysis of the quantified asymptotic behaviour of the solution $z(t)$ as $t\to\infty$ based on the (pseudo)spectral properties of $A$. 

We state only two representative results. In order to do so, recall that a smooth  function $f:(0,\infty)\to\R$ is said to be {\it completely monotonic} if $(-1)^nf^{(n)}(s)\ge0$ for all $s\in(0,\infty)$ and all $n\in\Z_+$. We shall say that a rational function $\psi$ is {\it totally monotonic} if there exist $\ep>0$ and a sequence $(a_n)_{n\ge0}\in\ell^1$ of non-negative terms such that 
$$\psi\left(\frac{\lambda-1}{\ep}\right)=\sum_{n=0}^\infty\frac{a_n}{\lambda^n},\qquad |\lambda|\ge1.$$
It can be shown that the restriction to $(0,\infty)$ of any totally monotonic function is completely monotonic. We may now state our two asymptotic results as follows. 

\begin{thm}
Suppose that $\sigma(A_0)\subseteq\C_-:=\{\lambda\in\C:\Re\lambda<0\}$ and $\{0\}\subseteq\Omega_\phi\subseteq\C_-\cup\{0\}$, and assume that the restriction to $(0,\infty)$ of the characteristic function $\phi$ is completely monotonic. Then
$$\|\dot{z}(t)\|=O\left(\frac{(\log t)^{|\frac{1}{2}-\frac{1}{p}|}}{t^{1/2}}\right),\qquad t\to\infty,$$
for all initial data $x\in\ell^p(\Z;\C^m)$. 

Moreover, if $\phi$ is totally monotonic and of the form $\phi(\lambda)=p_0(0)/p_0(\lambda)$ for $\lambda\in\rho(A_0)$, where $p_0$ is the characteristic polynomial of $A_0$, then $\|\dot{z}(t)\|=O(t^{-1/2})$ as $t\to\infty$, and this rate is optimal. 
\end{thm}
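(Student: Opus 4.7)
The plan is to translate the bound for $\|\dot z(t)\|=\|Ae^{tA}x\|$ into a continuous-parameter Katznelson-Tzafriri statement for the generator $A$ on $\ell^p(\Z;\C^m)$ and to deduce it from resolvent bounds extracted from the characteristic function $\phi$. First I would write $(\lambda I-A)^{-1}$ explicitly via a Neumann-series argument based on~\eqref{eq:char}, as a lower-triangular Toeplitz operator on $\ell^p(\Z;\C^m)$ with entries that are rational functions of $\phi(\lambda)$, $(\lambda I-A_0)^{-1}$ and $A_1$. Combined with the identity $\sigma(A)\setminus\sigma(A_0)=\Omega_\phi$ from \cite[Theorem~2.3]{PauSei17}, the hypotheses $\sigma(A_0)\subseteq\C_-$ and $\Omega_\phi\subseteq\C_-\cup\{0\}$ give $\sigma(A)\cap i\R\subseteq\{0\}$, and a uniform Toeplitz-symbol estimate shows that $A$ generates a bounded $C_0$-semigroup on every $\ell^p(\Z;\C^m)$.

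Second, I would pin down the local behaviour of the resolvent at $0$. Since $\phi$ is rational and completely monotonic on $(0,\infty)$ with $\phi(0)=1$ and $\phi'(0)<0$, a second-order expansion gives $|\phi(is)|^2=1+cs^2+O(s^3)$ with $c>0$, so the spectral curve $\Omega_\phi$ touches $i\R$ at $0$ tangentially, opening parabolically into $\C_-$. Hence $\mathrm{dist}(is,\sigma(A))\asymp s^2$ as $s\to 0$, and the explicit Toeplitz-symbol formula yields $\|(isI-A)^{-1}\|_{L(\ell^p)}=O(|s|^{-2})$ as $s\to 0$, uniformly for $1\le p\le\infty$.

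Third, I would convert this resolvent bound into decay of $\|Ae^{tA}\|$ via the continuous-parameter analogues of the results in Section~\ref{sect3.4}. On $\ell^2(\Z;\C^m)$, the continuous analogue of Theorem~\ref{thm:m_inv} applied to the profile $m(s)=s^{-2}$ (a function of reciprocally positive increase) gives $\|Ae^{tA}\|_{L(\ell^2)}=O(t^{-1/2})$; on $\ell^1$ and $\ell^\infty$, the continuous analogue of Theorem~\ref{thm:mlog} gives $\|Ae^{tA}\|=O(t^{-1/2}(\log t)^{1/2})$. A Riesz-Thorin interpolation between these endpoint bounds supplies the first assertion with the exponent $|1/2-1/p|$ on the logarithm.

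For the sharp statement when $\phi(\lambda)=p_0(0)/p_0(\lambda)$ is totally monotonic, I would exploit the non-negativity of the coefficients in $\phi((\mu-1)/\ep)=\sum_{n\ge 0}a_n\mu^{-n}$, which represent $e^{tA}$ as a Poisson-type subordination of the powers of an auxiliary power-bounded Toeplitz operator on $\ell^p(\Z;\C^m)$; positivity of the $a_n$ admits a sharp Gaussian-type kernel estimate on every $\ell^p$ that avoids the interpolation loss and gives $\|Ae^{tA}\|=O(t^{-1/2})$ uniformly in $p$, while optimality follows from the continuous analogue of the lower bound~\eqref{eq:KT_lb} applied to an initial datum in a generalised eigenvector chain of $A$ at $0$. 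The main obstacle is the sharp two-sided resolvent asymptotic: complete monotonicity alone yields only one-sided control, so one must exploit the rationality and the precise local structure of $\Omega_\phi$ to secure $\mathrm{dist}(is,\sigma(A))\asymp s^2$ rather than merely $\le Cs^2$.
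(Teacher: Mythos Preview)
Your outline for the first assertion is essentially the paper's own route: resolvent bounds along $i\R$ coming from the local geometry of $\Omega_\phi$ near $0$, the continuous analogues of Theorems~\ref{thm:mlog} and~\ref{thm:m_inv} applied with $m(s)\asymp s^{-2}$ on $\ell^1,\ell^\infty$ and $\ell^2$ respectively, and then Riesz--Thorin interpolation to produce the exponent $|1/2-1/p|$ on the logarithm. One caution: the step ``$\mathrm{dist}(is,\sigma(A))\asymp s^2$ hence $\|(isI-A)^{-1}\|=O(|s|^{-2})$'' is not automatic, since the distance-to-spectrum bound goes the wrong way; you genuinely need the Toeplitz symbol computation (or the multiplier norm of $\theta\mapsto(is-\phi^{-1}(e^{i\theta}))^{-1}$) to get the upper bound, and this is where complete monotonicity of $\phi$ enters in the actual argument.

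For the second assertion your proposal diverges from the paper and becomes vague at the crucial point. The paper does \emph{not} argue via a direct ``Gaussian-type kernel estimate'' on $e^{tA}$. Instead, total monotonicity with $\phi(\lambda)=p_0(0)/p_0(\lambda)$ is used to show that $S_\ep:=\ep A+I$ is power-bounded on $\ell^p(\Z;\C^m)$ for some $\ep>0$ (the non-negative $\ell^1$ coefficients in $\phi((\mu-1)/\ep)=\sum_{n\ge0}a_n\mu^{-n}$ are exactly what makes the associated Toeplitz powers uniformly bounded). One then sets $T:=\tfrac{\ep}{2}A+I=\tfrac12 S_\ep+\tfrac12 I$, a genuine convex combination of a power-bounded operator and the identity, so the Foguel--Weiss/Nevanlinna result quoted at the end of Section~3.4 gives $\|T^n(I-T)\|=O(n^{-1/2})$. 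Finally, Dungey's theorem \cite[Theorem~1.2]{Dun08} converts this discrete estimate into $\|e^{tA}A\|=O(t^{-1/2})$. This is the mechanism you are missing: the sharp rate on every $\ell^p$ comes from the \emph{discrete} Katznelson--Tzafriri theory (Foguel--Weiss plus Dungey), not from a refined continuous kernel bound. Your ``Poisson-type subordination'' gestures toward Dungey's result but does not isolate the convex-combination step that actually produces the $n^{-1/2}$ rate, and the phrase ``Gaussian-type kernel estimate'' is not an argument.
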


Both of these statements follow from \cite[Theorem~2.15]{PauSei20}. The first statement is a consequence of results in the quantified asymptotic theory of $C_0$-semigroups which may be viewed as analogues of the results discussed in Section~\ref{sect3.4}, together with an application of the Riesz-Thorin interpolation theorem. The second statement is even more closely related to the subject matter of this survey. Indeed, in order to establish the sharp rate $t^{-1/2}$ as $t\to\infty$, it was observed in \cite{PauSei20} that the assumptions ensure power-boundedness of the operator $S_\ep:=\ep A+I$ for some $\ep>0$. It follows from a result quoted at the end of Section~\ref{sect3.3} that the operator 
$$T:=\frac\ep2 A+I=\frac12 S_\ep+\frac12 I$$ is power-bounded and satisfies $\|T^n(I-T)\|=O(n^{-1/2})$ as $n\to\infty$.  This in turn implies, by a theorem of Dungey~\cite[Theorem~1.2]{Dun08}, that $\|T(t)A\|=O(t^{-1/2})$ as $t\to\infty$, where $(T(t))_{t\ge0}$ is the $C_0$-semigroup generated by $A$. The decay rate for $\dot{z}(t)$ as $t\to\infty$ now follows.

\subsection{Applications to Markov processes}
We end this section  by mentioning that the Katznelson-Tzafriri theorem and its variants have had numerous applications in the study of Markov processes and their long-time asymptotic behaviour. Indeed, as mentioned in the introduction, the main motivation behind Katznelson and Tzafriri's original paper~\cite{KT86} was to isolate an interesting and purely operator-theoretic fact from a complicated argument given by Zaharopol~\cite{Zah86} (building on earlier work by Ornstein and Sucheston~\cite{OrnSuc70} and by Foguel~\cite{Fog76}) to prove a certain ``zero-two law'' for Markov processes. Markov processes have remained a fruitful area of application for results of Katznelson-Tzafriri type, and it would be beyond the scope of this survey to provide anything resembling a complete overview. We mention, however, that much of Dungey's influential work on rates of decay in the Katznelson-Tzafriri theorem was directly motivated by his work on random walks on groups (see \cite{Dun07a, Dun07b, Dun07c, Dun08b, Dun08c}), and we refer the interested reader to  \cite{CoCuLi14, CohLin16, CouSC90} for further results relating to the Katznelson-Tzafriri theorem in the context of Markov processes and positive contractions on $L^p$-spaces.

\section{Other operator semigroups} \label{sect5}

In this section, we briefly indicate how results in Sections \ref{sect2} and \ref{sect3} can be formulated for other operator semigroups.

\subsection{$C_0$-semigroups} \label{sect5.1}
Some of the theorems of Katznelson-Tzafriri type have analogues for $C_0$-semigroups.   Let $(T(t))_{t\ge0}$ be a bounded $C_0$-semigroup on a Banach space $X$, with generator $-A$.   There are some conditions which ensure that $\lim_{t\to\infty} \|T(t)f(A)\| = 0$ for appropriate functions $f$.   In particular, if $f$ is the Laplace transform of a function $g \in L^1(\R_+)$, then $f(A)$ may be defined by the Hille-Phillips functional calculus as 
\[
f(A)x = \int_0^\infty g(t) T(t)x \,dt, \qquad x \in X.
\]
A direct analogue of Theorem \ref{KTthm} for this class of functions was proved in \cite{ESZ2} and \cite{Vu92}, independently by different methods.  See also \cite[Theorem 5.7.4]{ABHN}, \cite[Theorem 6.14]{BCT} and \cite{Sei15a}.   

The analogue of Corollary \ref{KTcor} is the result that $\lim_{t\to\infty} \|T(t)A^{-1}\| = 0$ if $\sigma(A) \cap i\R$ is empty, which was first noted in \cite{Bat90}.   The analogues of Theorems \ref{thm:mlog} and \ref{thm:m_inv} on rates of convergence were proved in \cite{BD08} and \cite{RSS} respectively, before the results for the discrete case.

The Banach algebra $\B(\D)$ has appeared in the literature in various contexts.   There is a corresponding space $\B(\C_+)$ of analytic Besov functions on the right half-plane $\C_+$.   It had appeared in the literature only very occasionally until it was studied in detail in \cite{BGT} and \cite{BGT2}.  A  $\B$-calculus  for (unbounded) operators $A$ which are negative generators of bounded $C_0$-semigroups was also studied in those papers.   The theory of the algebras and of their functional calculi is more complicated in the case of $\C_+$, but analogous results have been obtained in most cases. The results in Section \ref{sect2} of this paper are similar to those obtained in \cite[Section 4]{BGT} and \cite[Section 4]{BGT2},
 with the exception of Theorem \ref{BSthm}.    A shorter version of the papers is included in \cite{BGTsum}.

It is possible to formulate to a statement which, if true, would be a continuous-parameter version of Theorem \ref{BSthm}, but we have not yet been able to determine whether or not it is true.

\subsection {Several commuting operators}
\def\boldT{{\mathbf T}}

Let $\boldT := (T_1,\dots,T_m)$ be an $m$-tuple of commuting bounded linear operators on a Banach space $X$.   It is of some interest to consider versions of the Katznelson-Tzafriri theorem for such an $m$-tuple.   Indeed, Katznelson and Tzafriri raised this in \cite[Section 4]{KT86}.   For such a theorem, we need to have a notion of the unitary joint spectrum for ${\bf T}$.   Although there are many different notions of the joint spectrum of $\boldT$, they coincide in the case of the unitary joint spectrum for power-bounded operators.   
The {\it unitary joint spectrum} of $\boldT$ is the set of all $(\l_1,\dots,\l_m) \in \T^m$ for which there exists a sequence $(x_k)_{k\ge1}$ in $X$ such that
\[
\lim_{k\to\infty} \|T_j x_k - \l_j x_k\| \to 0, \qquad k\to\infty, \; j=1,\dots,m.
\]

In \cite[Theorem 3.8]{KT86} and \cite[Corollary 2.6]{AR89}, it was proved that
\[
\lim_{n \to \infty} \|(T_1\dots T_m)^n f(\boldT)\| = 0,
\]
if $f$ is in the $m$-dimensional Wiener algebra  $W(\D^m)$ and $f$ is of spectral synthesis in $W(\T^m)$ with respect to  $\sigma_u(\boldT)$.     The paper \cite{AR89} also contained some weighted versions of the results.   Bercovici's paper \cite{Ber90} included a version for algebras which are $m$-dimensional versions of the algebras considered in \cite{ESZ1}.

In \cite[Theorem 4.1]{Zar13}, Zarrabi considered the case of $m$ commuting contractions on a Hilbert  space, and functions $f \in W^+(\D^m)$ which do not necessarily vanish on $\sigma_u(\boldT)$.   He extended Theorem \ref{Zarthm} by showing that
\begin{equation} \label{zarbd}
\lim_{n \to \infty} \|(T_1\dots T_m)^n f(\boldT)\| = \sup \{|f(\l_1,\dots,\l_m)|: (\l_1,\dots,\l_m) \in \sigma_u(\boldT) \}.
\end{equation}

\subsection{General semigroups}  
 In \cite[p.365]{Ber90}, Bercovici wrote
\begin{quote} {\it  Since the consideration of $k$-tuples leads to cumbersome notation, we will work with a somewhat more abstract setting.}
\end{quote}
For Bercovici, the more abstract setting was the case of a semigroup $S$ contained in a discrete abelian group $G$, and a semigroup homomorphism from $S$ to $L(X)$.   For example, the case of several commuting operators corresponds to $G = \Z^m$ and $S =\Z_+^m$.

One may go further, and consider a locally compact abelian group $G$, and a measurable semigroup $S$ contained in $G$, with non-empty interior, and with $G = S - S$.   In general, one is considering representations of $S$ on a Banach space $X$, that is, semigroup homomorphisms $T : S \to \L(X)$ which are continuous in the strong operator topology.    When $G = \R$ and $S = \R_+$, the representations of $S$ are precisely the $C_0$-semigroups.   By considering the Haar measure of $G$ on the subgroup $S$, one may consider functions $g \in L^1(S)$.  For a bounded representation $T$ on a Banach space $X$, bounded operators on $X$ are defined by
\[
\widehat{g}(T)x = \int_S g(t) T(t) x \,dt, \qquad x \in X.
\]
The unitary spectrum $\operatorname{Sp}_u(T,S)$ of $T$ with respect to $S$ can be defined as a subset of the dual group of $G$, and the notion of spectral synthesis in $L^1(G)$ can also be defined.   The following generalisation of Theorem \ref{KTthm} was proved in \cite{BV92}.

\begin{thm} \label{Bvthm}
Let $G$ and $S$ be as above, and $T$ be a bounded representation of $S$ on a Banach space.  Let $g \in L^1(S)$, and assume that $g$ is of spectral synthesis in $L^1(G)$ with respect to $\operatorname{Sp}_u(T,S)$.   Then $\lim_S \|T(t) \widehat{g}(T)\|=0$.
\end{thm}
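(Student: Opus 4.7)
The plan is to adapt our second proof of Theorem~\ref{KTthm} to the group--semigroup setting $(G,S)$, using bounded complete trajectories and a spectral pairing in place of the Carleman transform.

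First I would reduce the claim to strong convergence: it suffices to show $\|T(t)\widehat{g}(T)x\|\to 0$ as $t\to\infty$ in $S$ for every $x\in X$, the lift to operator-norm convergence following from an $\ell^\infty(\N;X)$-type argument analogous to the last paragraph of our second proof of Theorem~\ref{KTthm}. Writing $X_0$ for the closed subspace of those $x\in X$ with $\|T(t)x\|\to 0$ as $t\to\infty$ in $S$, I would then invoke the natural analogue of Proposition~\ref{prp:ann} to identify $X_0=M_\perp$, where $M\subseteq X^*$ consists of those $x^*$ admitting a \emph{bounded complete trajectory} $\xi:G\to X^*$ for $T^*$, i.e.\ a bounded function with $\xi(0)=x^*$ and $T(t)^*\xi(s)=\xi(s+t)$ for all $s\in G$ and $t\in S$. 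This analogue rests on a limit-isometry construction on the quotient $X/X_0$, and uses only that $G=S-S$ and $S$ has non-empty interior. The goal therefore becomes to prove $\widehat{g}(T)^*x^*=0$ for every $x^*\in M$.

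The decisive step is the spectral pairing claim: whenever $\xi:G\to X^*$ is a bounded complete trajectory for $T^*$ and $h\in L^1(G)$ has Fourier transform vanishing on an open neighbourhood of $\operatorname{Sp}_u(T,S)$, the vector-valued integral $\int_G h(t)\,\xi(t)\,dt$ vanishes in $X^*$; informally, the Beurling spectrum of $\xi$ is contained in $\operatorname{Sp}_u(T,S)$. Granting this, let $\widetilde{g}\in L^1(G)$ denote the extension of $g$ by zero. The spectral synthesis hypothesis produces $h_n\in L^1(G)$ whose Fourier transforms vanish on open neighbourhoods of $\operatorname{Sp}_u(T,S)$ with $h_n\to\widetilde{g}$ in $L^1(G)$. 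Since $T(t)^*x^*=\xi(t)$ for $t\in S$ and $\|\xi\|_\infty<\infty$,
$$\widehat{g}(T)^*x^*=\int_G \widetilde{g}(t)\,\xi(t)\,dt=\lim_{n\to\infty}\int_G h_n(t)\,\xi(t)\,dt=0,$$
yielding the conclusion modulo the spectral pairing claim.

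The main obstacle is the spectral pairing claim itself. In the discrete case we established it via holomorphic extension of the Carleman transform of $\xi$ across $\rho(T)\cap\T$, but no notion of holomorphy is available on a general locally compact abelian group. I would instead work inside the weak-$*$ closed $L^1(G)$-submodule $Y\subseteq X^*$ generated by the translates $\{\xi(\cdot+s):s\in G\}$ and argue, via a Wiener-type Tauberian theorem, that every character in the Arveson spectrum of $Y$ is realised as an approximate eigenvalue of $T$, and hence lies in $\operatorname{Sp}_u(T,S)$. The hypotheses that $S$ has non-empty interior and $G=S-S$ enter crucially here, ensuring that the operators $\{\widehat{h}(T):h\in L^1(S)\}$ detect enough characters of $\hat G$ for this dualisation to succeed.
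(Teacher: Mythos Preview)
The paper does not give its own proof of Theorem~\ref{Bvthm}; it merely states the result and attributes it to \cite{BV92}. There is therefore no in-paper argument to compare against.

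That said, your outline is sound and is in the same spirit as the approach of \cite{BV92}, which also proceeds via a limit-isometry construction (dual to your bounded complete trajectories) and spectral theory on the group $G$. You have correctly identified the crux: the spectral pairing claim, asserting that the Beurling spectrum of a bounded complete trajectory $\xi$ is contained in $\operatorname{Sp}_u(T,S)$. Your plan to establish this via Arveson-spectrum and Wiener--Tauberian methods, rather than by holomorphic extension, is the right generalisation; this is exactly where the proof for $\Z$ or $\R$ must be rethought for a general LCA group, and the argument you sketch (showing that characters in the spectrum of the translation module generated by $\xi$ arise as approximate eigenvalues of $T$) is the standard route.

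One small caution on the reduction from strong to uniform convergence: the $\ell^\infty(\N;X)$ trick in the paper's second proof of Theorem~\ref{KTthm} relies on extracting a \emph{sequence} witnessing failure of convergence, whereas here the limit is along a net in $S$. Unless $S$ is $\sigma$-compact this needs care. It is cleaner, and entirely general, to pass instead to the bounded representation $t\mapsto L_{T(t)}$ by left multiplication on $L(X)$, which has the same unitary spectrum; the paper itself mentions this alternative at the end of the second proof of Theorem~\ref{KTthm}.
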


Zarrabi \cite[Theorem 2.6]{Zar13} considered representations of semigroups in this class by contractions on Hilbert spaces, obtaining a more general version of \eqref{zarbd} and a similar formula for $C_0$-semigroups.    The paper \cite{Sei15c} extends Theorem \ref{Lekthm} to bounded representations of these semigroups on Hilbert spaces, and it also extends Zarrabi's result \eqref{zarbd} for contractive representations on Hilbert spaces.

\subsection*{Conflict of interest statement}
 On behalf of all authors, the corresponding author states that there is no conflict of interest.
 
\subsection*{Data availability statement}
No datasets were generated or analysed during the current study.

\end{document}